\providecommand{\U}[1]{\protect\rule{.1in}{.1in}}
\newtheorem{theorem}{Theorem}[section]
\newtheorem{proposition}[theorem]{Proposition}
\newtheorem{corollary}[theorem]{Corollary}
\newtheorem{example}[theorem]{Example}
\newtheorem{remark}[theorem]{Remark}
\newtheorem{lemma}[theorem]{Lemma}
\newtheorem{final remark}[theorem]{Final Remark}
\newcommand{\cvfa}{\overset{|\sigma|(E,E^{*})}{\longrightarrow}}
\newcommand{\cvfea}{\overset{|\sigma|(E^{*}, E)}{\longrightarrow}}
\newcommand{\cvfdea}{\overset{|\sigma|(E^{**}, E^{*})}{\longrightarrow}}
\newcommand{\cvf}{\overset{\omega}{\rightarrow}}
\newcommand {\cvfe} {\overset{\omega^\ast}{\rightarrow}}
\newcommand {\R}{\mathbb{R}}
\newcommand {\N} {\mathbb{N}}
\begin{document}

\title{Grothendieck's compactness principle for the absolute weak topology}
\author{Geraldo Botelho\thanks{Supported by Fapemig Grant PPM-00450-17. }\, , Jos\'e Lucas P. Luiz and  Vinícius C. C. Miranda\thanks{Supported by CNPq Grant 150894/2022-8\newline 2020 Mathematics Subject Classification: 46B42, 46B40, 46A50.\newline Keywords: Banach lattices, absolute weak compactness, absolute weak sequential compactness, positive Schur property. }}
\date{}
\maketitle

\begin{abstract} We prove the following results: (i) Every absolutely weakly compact set in a Banach lattice is absolutely weakly sequentially compact. (ii) The converse of (i) holds if $E$ is separable or $B_{E^{**}}$ is absolutely weak$^*$ compact. (iii) Every absolutely weakly compact subset of a Banach lattice is contained in the closed convex hull of an absolutely weakly null sequence if and only if the Banach lattice has the positive Schur property. Examples and applications are provided.
\end{abstract}

\section{Introduction and background}
The compactness principle proved by Grothendieck in \cite[p.112]{grothendieck} states that every norm compact subset of a
Banach space is contained in the closed convex hull of a norm null sequence. In \cite{dowling}, the authors studied a version of this principle concerning the weak topology and proved the following outstanding result: every weakly compact subset of a Banach space is contained in the closed convex hull of a weakly null sequence if and only if the Banach space has the Schur property (meaning that weakly null sequences are norm null). The hard part of their proof uses basic sequence techniques. Later, another proof appeared in \cite{johnson} using an operator theoretic approach. As expected, Grothendieck compactness-type principles have been considered for different topologies. For instance, P. N. Dowling and D. Mupasiri \cite{mupasiri} proved a Grothendieck compactness principle for the Mackey dual topology. 
We also refer the reader to \cite{causey} and \cite{dowfree} for different perspectives concerning Grothendieck's compactness principle.

In Banach lattice theory, topologies related to the order structure play a central role. Perhaps the most known of these topologies is the absolute weak topology $|\sigma|(E, E^*)$ on a Banach lattice $E$, which lies between the weak and norm topologies. It is known that the weak and the absolute weak topologies coincide only on finite dimensional Banach lattices. Compact (and sequentially compact) sets in these topologies are also different in general (cf. Example \ref{priexe}). The main purpose of this paper is to prove a Grothendieck compactness principle for the weak absolute topology. The lattice counterpart of the Schur property is the positive Schur property (meaning that positive weakly null sequences are norm null), so, bearing in mind the weak compactness principle proved in \cite[Theorem 1]{dowling}, if the weak topology is replaced with the absolute weak topology, then it is natural to replace the Schur property with the positive Schur property. So, from the beginning we tried to prove that every absolutely weakly compact subset of a Banach lattice is contained in the closed convex hull of an absolutely weakly null sequence if and only if the Banach lattice has the positive Schur property.  

In our way to prove the main result, we realized that some well known results on the weak topology on Banach spaces that are often used in the proofs of compactness principles have no known analogues for the absolute weak topology on Banach lattices. The result is that we were forced to prove lattice versions for the absolute weak topology of known results for the weak topology. As any expert on the field knows, the two implications of the Eberlein-Šmulian Theorem (weak compactness coincides with weak sequential compactness in Banach spaces) are among these results. Eberlein-Šmulian-type results have been studied for different topologies in Banach lattices, a very recent development can be found in \cite[Section 8]{kandic}. But we have not been able to find anything in this direction for the absolute weak topology. In Section 2 we prove that a Šmulian Theorem holds in full generality: absolutely weakly compact subsets of Banach lattices are absolutely weakly sequentially compact (cf. Theorem \ref{smulian}). Since Eberlein's Theorem is usually proved applying Alaoglu's Theorem, we investigated the validity of an analogue of Alaoglu's Theorem for the absolute weak topology. We ended up proving that this is not case (cf. Proposition \ref{noalaoglu} and Example \ref{ex77}). We thus proved a partial Eberlein Theorem, establishing that absolutely weakly sequentially compact subsets of Banach lattice $E$ are absolutely weakly compact if $E$ is separable or the closed unit ball of $E^{**}$ is absolutely weak$^*$-compact (cf. Theorem \ref{eberlein}). Examples of nonseparable Banach lattices for which the closed unit ball of $E^{**}$ is absolutely weak$^*$-compact are provided.

Although Theorem \ref{eberlein} is only a partial version of Eberlein's Theorem, in Section 3 we managed to overcome this drawback. In the end we prove the Grothendieck compactness principle for the absolute weak topology exactly as we have tried from the beginning (cf. Theorem \ref{mainteo}). It is worth mentioning that a crucial part of the proof of \cite[Theorem 1]{dowling}, which we isolated in Lemma \ref{lema}, plays an important role in our proof as well. As an application we obtain two characterizations of the dual positive Schur property relating it to the absolute weak$^*$ topology.  




We refer the reader to \cite{alip, meyer} for background on Banach lattices and to \cite{fabian} for Banach space theory. Throughout this paper $X, Y$ denote Banach spaces and $E, F$ denote Banach lattices. We denote by $B_X$ the closed unit ball of $X$ and by $X^*$ its topological dual. For a subset $A \subset X$, $\overline{\text{co}}(A)$ denotes the closed convex hull of $A$. As usual, the weak topology on a Banach space $X$ shall be denoted by $\sigma(X,X^{*})$ or $\omega$, and the weak$^*$ topology on $X^*$ by $\sigma(X^{*},X)$ or $\omega^*$.


\section{Eberlein-Šmulian Theorem}
For a Banach lattice $E$ and a nonempty subset $A$ of the topological dual $E^*$ of $E$, the absolute weak topology $|\sigma|(E,A)$ is the locally convex-solid topology on $E$ generated by the family $\{p_{x^{*}}: x^{*} \in A\}$ of lattice seminorms, where $p_{x^{*}}(x) = |x^{*}| (|x|)$ for all $x \in E$ and $x^{*} \in A$. For all $\varepsilon > 0$ and $x_1^{*}, \dots, x_n^{*} \in A$, set
$$ V(0;\, |x_1^{*}|, \dots, |x_n^{*}|;\, \varepsilon) := \{x \in E : |x_i^{*}|(|x|) < \varepsilon, i=1,\ldots,n\}. $$
The collection of all sets of this form is a basis of zero neighborhoods for $|\sigma|(E, A)$. Morever, the weak topology $\sigma(E,A)$ on $E$ determined by the functionals belonging to $A$ is contained in $|\sigma|(E, A)$. In this paper, we are interested in the case $A = E^*$.
It follows from \cite[Theorems 2.36 and 2.38]{alip2} that the weak topology $\sigma(E,E^*)$ and the absolute weak topology $|\sigma|(E,E^*)$ coincide only on finite dimensional Banach lattices. Naturally, this remarkable fact raises the question of whether analogues of known results for the weak topology in Banach spaces hold for the absolute weak topology in Banach lattices. In this section we address this question for the Eberlein-Šmulian Theorem. 

We start by showing that the absolute weak topology on a Banach sublattice coincides with the corresponding induced topology. By $\tau |_Y$ we denote the topology on a subset $Y$ of a topological $X$ induced by the topology $\tau$ of $X$.

\begin{lemma} \label{induzida}
For a Banach sublattice $F$ of the Banach lattice $E$, the absolute weak topology on $F$ coincides with the topology induced by the absolute weak topology of $E$, that is, $(F, |\sigma|(F, F^{*})) = (F, |\sigma|(E,E^{*})|_{F})$.
\end{lemma}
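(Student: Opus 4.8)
The plan is to view each of the two topologies as a locally convex-solid topology generated by a family of lattice seminorms, and to prove they coincide by showing that every generating seminorm of one is continuous with respect to the other (equivalently, that the two families are mutually dominated). Write $\tau_1 = |\sigma|(F,F^*)$, generated by the seminorms $p_{y^*}(y) = |y^*|(|y|)$ with $y^* \in F^*$, the modulus being taken in $F^*$; and $\tau_2 = |\sigma|(E,E^*)|_F$, generated by the restrictions to $F$ of the seminorms $p_{x^*}(x) = |x^*|(|x|)$ with $x^* \in E^*$, the modulus being taken in $E^*$. Throughout I would use the sublattice identities $F^+ = F \cap E^+$ and $|y|_F = |y|_E$ for $y \in F$, which let me write $|y|$ unambiguously and, crucially, guarantee $|y| \in F^+$.

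For the inclusion $\tau_2 \subseteq \tau_1$, I fix $x^* \in E^*$ and observe that the restriction $\phi := (|x^*|)|_F$ of the positive functional $|x^*| \in (E^*)^+$ is again positive on $F$ (since $F^+ \subseteq E^+$), so $\phi \in (F^*)^+$ and $|\phi| = \phi$. Then for every $y \in F$,
$$p_{x^*}(y) = |x^*|(|y|) = \phi(|y|) = |\phi|(|y|) = p_\phi(y),$$
so that $p_{x^*}|_F = p_\phi$ is itself one of the generating seminorms of $\tau_1$, hence $\tau_1$-continuous. As $x^*$ is arbitrary, $\tau_2 \subseteq \tau_1$.

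For the reverse inclusion $\tau_1 \subseteq \tau_2$, I fix $y^* \in F^*$ and apply the Hahn--Banach theorem to extend it to some $z^* \in E^*$ with $z^*|_F = y^*$. The point to exploit is the one-sided comparison of moduli under restriction: for every $u \in F^+$,
$$|y^*|(u) = \sup_{w \in F,\, |w| \le u} |z^*(w)| \;\le\; \sup_{v \in E,\, |v| \le u} |z^*(v)| = |z^*|(u),$$
where the first equality uses $z^*|_F = y^*$ and the inequality holds because $\{w \in F : |w| \le u\} \subseteq \{v \in E : |v| \le u\}$. Taking $u = |y|$ for $y \in F$ yields $p_{y^*}(y) \le p_{z^*}(y) = (p_{z^*}|_F)(y)$, so $p_{y^*}$ is dominated by a generating seminorm of $\tau_2$ and is therefore $\tau_2$-continuous. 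Hence $\tau_1 \subseteq \tau_2$, and the two inclusions give $\tau_1 = \tau_2$.

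The main obstacle I would flag is the interplay between the modulus computed in $E^*$ and the modulus computed in $F^*$: these need not agree on $F^+$ (they already differ for the diagonal sublattice of $\R^2$), so one cannot hope for the naive identity $p_{x^*}|_F = p_{x^*|_F}$ in general. What makes the argument go through is that equality of the two moduli is never needed --- only the exact identity $p_{x^*}|_F = p_\phi$, obtained by restricting the \emph{positive} functional $|x^*|$, in one direction, and the inequality $|y^*| \le |z^*|$ on $F^+$ for a Hahn--Banach extension $z^*$ in the other --- both resting on the sublattice identity $|y|_F = |y|_E$.
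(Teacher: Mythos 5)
Your proof is correct. The restriction direction ($|\sigma|(E,E^*)|_F \subseteq |\sigma|(F,F^*)$) is exactly the paper's argument: the restriction of the positive functional $|x^*|$ to the sublattice is positive, giving the identity $p_{x^*}|_F = p_{(|x^*|)|_F}$. In the extension direction, however, the paper takes a different route: it invokes the Hahn--Banach-type extension theorem for \emph{positive} functionals on sublattices of Banach lattices (Lotz, Corollary 1.3), which extends $|y^*| \in (F^*)^+$ to a positive functional on all of $E$ and yields the exact equality of $p_{y^*}$ with the restriction of a generating seminorm of $|\sigma|(E,E^*)$. You instead extend $y^*$ itself by the ordinary Hahn--Banach theorem and use the Riesz--Kantorovich formula to obtain only the one-sided estimate $|y^*| \le |z^*|$ on $F^+$, which dominates $p_{y^*}$ by $p_{z^*}|_F$ --- and domination is all the topology comparison needs. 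Your version is more elementary, since it avoids the positive extension theorem (a genuinely deeper result than norm-preserving Hahn--Banach), at the cost of settling for an inequality rather than an identity; the paper's version buys the cleaner statement that every generating seminorm of $|\sigma|(F,F^*)$ is literally the restriction of one of $|\sigma|(E,E^*)$. Your closing remark that $p_{x^*}|_F \ne p_{x^*|_F}$ in general (the diagonal of $\mathbb{R}^2$) correctly identifies why some such device is unavoidable.
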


\begin{proof} It is enough to check that basic neighborhoods of zero in one of the topologies belong to the other one. In one direction, apply the Hahn--Banach-type extension theorem for positive functionals (see \cite[Corollary 1.3]{lotz}); and in the other direction just use that the restriction of a positive functional to a sublattice is a positive functional. 
%
\end{proof}

As usual, we say that a net $(x_\alpha)_\alpha$ in a Banach lattice $E$ is absolutely weakly convergent if it is convergent with respect to the absolute weak topology, that is, if there exists $x \in E$ such that $x_\alpha \cvfa x$. In particular, $(x_\alpha)_\alpha$ is said to be absolutely weakly null if $x_\alpha \cvfa 0$. The following facts shall be used without reference.

\begin{remark}\label{remconv}\rm (i) $x_\alpha \cvfa x$ in $E$ if and only if $x^{*}(|x_\alpha - x|) \longrightarrow 0$ for every $0 \leq x^{*} \in E^{*}$. Thus, every absolutely weakly convergent sequence is weakly convergent to the same element.\\
(ii) A net $(x_\alpha)_\alpha$ in a Banach lattice is absolutely weakly null if and only if $(|x_\alpha|)_\alpha$ is weakly null. In particular, a positive net is absolutely weakly null in a Banach lattice $E$ if and only if it is weakly null.\\
(iii) A disjoint sequence is absolutely weakly null in a Banach lattice if and only if it is weakly null.

(i) and (ii) follow from the definition of the absolute weak topology. For (iii), if $(x_n)_n$ is a disjoint weakly null sequence in a Banach lattice $E$, then $|x_n| \stackrel{\omega}{\longrightarrow} 0$ by \cite[Proposition 1.3]{wnukdual}, hence $(x_n)_n$ is absolutely weakly null.
\end{remark}

  A subset $K$ of a Banach lattice $E$ is said to be absolutely weakly compact (respectively, absolutely weakly sequentially compact) if it is compact in the topology $|\sigma|(E, E^{*})$, or simply, if it is $|\sigma|(E, E^{*})$-compact (respectively. if it is $|\sigma|(E, E^{*})$-sequentially compact, that is, if every sequence in $K$ has an absolutely weakly convergent subsequence to an element of $K$). It is immediate that every absolutely weakly compact subset of a Banach lattice is weakly compact. Moreover, as a consequence of the Eberlein-Šmulian Theorem every absolutely weakly sequentially compact subset of a Banach lattice is weakly compact as well.

\begin{lemma} \label{absweak2}
If a subset $K$ of a separable Banach lattice $E$ is absolutely weakly compact or absolutely weakly sequentially compact, then the absolute weak topology $|\sigma|(E, E^{*})$ is metrizable on $K$.
\end{lemma}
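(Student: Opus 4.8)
The plan is to use the separability of $E$ to single out a countable subfamily of the defining lattice seminorms $\{p_{x^{*}}\}$ whose associated metric recovers $|\sigma|(E,E^{*})$ on $K$. The starting point is that $E$ separable makes $(B_{E^{*}}, \omega^{*})$ compact and metrizable, hence separable; its positive part $B_{E^{*}}^{+} := B_{E^{*}}\cap E^{*}_{+}$ is a subset of this separable metric space (in fact a weak$^{*}$-closed one), so it is weak$^{*}$-separable and admits a weak$^{*}$-dense sequence $(x_{n}^{*})_{n}$. Since $x_{n}^{*}\geq 0$ we have $p_{x_{n}^{*}}(x) = x_{n}^{*}(|x|)$, and I would first check that this sequence detects absolute values: if $x_{n}^{*}(|x|)=0$ for all $n$, then weak$^{*}$-density gives $z^{*}(|x|)=0$ for every $z^{*}\in B_{E^{*}}^{+}$, and because $\|u\| = \sup\{z^{*}(u) : z^{*}\in B_{E^{*}}^{+}\}$ for $u\geq 0$, this forces $|x|=0$, i.e. $x=0$.

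With this in hand I would define on $E$ the metric
$$ d(x,y) = \sum_{n=1}^{\infty} 2^{-n}\,\min\{1,\, x_{n}^{*}(|x-y|)\}, $$
which is a genuine metric by the point-separation just verified. Each $p_{x_{n}^{*}}$ is $|\sigma|(E,E^{*})$-continuous by the very definition of the topology, so if $x_\alpha \cvfa x$ then every summand tends to $0$ and dominated convergence yields $d(x_\alpha,x)\to 0$; thus the identity map $i\colon (K, |\sigma|(E,E^{*})) \to (K, \tau_{d})$ is continuous, where $\tau_{d}$ denotes the metric topology. Equivalently, $\tau_{d}$ is coarser than $|\sigma|(E,E^{*})|_{K}$. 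The whole task is then to promote this comparison to an equality, and this is precisely where (sequential) compactness becomes indispensable: weak$^{*}$-density provides only pointwise, never uniform, approximation of positive functionals, so one should not expect $(p_{x_{n}^{*}})_{n}$ to generate $|\sigma|(E,E^{*})$ on all of $E$.

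If $K$ is absolutely weakly compact the conclusion is immediate, since $i$ is a continuous bijection from the compact space $(K,|\sigma|(E,E^{*}))$ onto the Hausdorff space $(K,\tau_{d})$, hence a homeomorphism, so $|\sigma|(E,E^{*})|_{K}=\tau_{d}$ is metrizable. The case where $K$ is only absolutely weakly sequentially compact is the main obstacle, as the compact-to-Hausdorff shortcut is no longer available, and I would handle it by a direct argument. Write $\tau := |\sigma|(E,E^{*})|_{K}$ for the finer topology and suppose some $\tau$-closed set $C$ is not $\tau_{d}$-closed. Then there is $x\in \overline{C}^{\,\tau_{d}}\setminus C$, and since $\tau_{d}$ is metrizable, $x$ is a $\tau_{d}$-limit of a sequence $(x_{k})_{k}$ in $C$. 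By absolute weak sequential compactness a subsequence $(x_{k_{j}})_{j}$ is $\tau$-convergent to some $y\in K$, and $y\in C$ because $C$ is $\tau$-closed, hence $\tau$-sequentially closed. But $\tau\supseteq\tau_{d}$ forces $x_{k_{j}}\to y$ in $\tau_{d}$ as well, while $x_{k_{j}}\to x$ in $\tau_{d}$; Hausdorffness of $\tau_{d}$ then gives $y=x\in C$, contradicting $x\notin C$. Therefore $\tau=\tau_{d}$ is metrizable, which settles both cases.
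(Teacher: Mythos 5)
Your proof is correct and follows essentially the same strategy as the paper's: extract a countable family of lattice seminorms that separates points, note that the resulting metrizable topology is coarser than $|\sigma|(E,E^{*})$, and upgrade the continuous identity map to a homeomorphism via compactness (continuous bijection onto a Hausdorff space) in the first case and via the sequential-closedness argument in the second. The only cosmetic difference is the source of the countable family — the paper uses Hahn--Banach norming functionals attached to a dense sequence in $E$, while you use a weak$^{*}$-dense sequence in $B_{E^{*}}^{+}$ obtained from weak$^{*}$ metrizability of the dual ball — and both constructions work.
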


\begin{proof} Let $\{x_n : n \in \N \}$ be a dense subset of $E$. By Hahn-Banach we can take $D = \{x_n^{*} :n \in \N \} \subset E^{*}$ such that $\|x_n^{*}\| = 1$ and $x_n^{*}(x_n) = \|x_n\|$ for every $n \in \N$. Combining the denseness of the set $\{x_n: n \in \N \}$ with these properties of the functionals $(x_n^{*})_n$, it is easy to check that the bounded linear operator
$$T \colon E \longrightarrow \ell_\infty~,~T(x) = (x_n^{*}(x))_n,$$ is an isometric embedding. Consider the locally convex-solid topology $|\sigma|(E, D)$ which is generated by the sequence $\{p_{x_n^{*}}: n \in \N \}$ of seminorms given by $p_{x_n^{*}}(x) = |x_n^{*}|(|x|)$ for every $x \in E$. If $x \in E$ is such that $p_{x_n^{*}}(x) = 0$ for every $n \in \N$, then
$$ |x_n^{*}(x)| \leq |x_n^{*}|(|x|) = p_{x_n^{*}}(x) = 0 $$
for every $n \in \N$, therefore $\|x\| = \|T(x)\| = 0 $. By \cite[Lemma 5.76]{aliprantis} it follows that $|\sigma|(E, D)$ is a Hausdorff locally convex topology on $E$ generated by the sequence $\{p_{x_n^{*}}: n \in \N \}$ of seminorms. So, by \cite[Lemma 5.75]{aliprantis} we conclude that $|\sigma|(E, D)$ is metrizable. Moreover, since $D \subset E^{*}$, it is immediate that $|\sigma|(E, D) \subset |\sigma|(E, E^{*})$, hence the identity map $Id \colon (K, |\sigma|(E, E^{*})) \longrightarrow (K, |\sigma|(E, D))$ is continuous.

Supposing that $K$ is absolutely weakly compact, we get from \cite[Theorem 2.36]{aliprantis} that $Id$ is an homeomorphism because  $(K, |\sigma|(E, E^{*}))$ is compact and $(K, |\sigma|(E, D))$ is Hausdorff. Therefore $|\sigma|(E, D)$ and $|\sigma|(E, E^{*})$ coincide on $K$, from which it follows that $(K, |\sigma|(E, E^{*}))$ is metrizable.

Assume now that $K$ is absolutely weakly sequentially compact. In order to prove that $Id$ is an homeomorphism, let $F$ be a $|\sigma|(E, E^{*})|_{K}$-closed subset of $K$ and let $(x_n)_n \subset F$ be such that $x_n \cvfa x$ for some $x \in K$. Since $F \subset K$, the assumption yields that  there exist a subsequence $(x_{n_k})_k$ of $(x_n)_n$ and $y \in K$ such that $x_{n_k} \cvfa y$. As $F$ is $|\sigma|(E, E^{*})$-closed, $y \in F$. Moreover, since $D \subset E^{*}$, $x_{n_k} \overset{|\sigma|(E,D)}{\longrightarrow} y$, which gives $x = y$.
    This proves that $Id(F) = F$ is sequentially $|\sigma|(E,D)$-closed and, since $|\sigma|(E,D)$ is metrizable, we obtain that $F$ is $|\sigma|(E,D)$-closed. Thus $Id$ is an homeomorphism. It follows that $|\sigma|(E, D)|_K$ and $|\sigma|(E, E^{*})|_K$ coincide on $K$, and we conclude that $(K, |\sigma|(E, E^{*}))$ is metrizable.
\end{proof}

We are ready to prove the Šmulian part of the Eberlein-Šmulian Theorem for the absolute weak topology.

\begin{theorem} \label{smulian}
    Absolutely weakly compact subsets of Banach lattices are absolutely weakly sequentially compact.
\end{theorem}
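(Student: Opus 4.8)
The plan is to reduce the problem to a separable, metrizable situation and then invoke ordinary sequential compactness of compact metric spaces. Let $K$ be absolutely weakly compact and let $(x_n)_n$ be an arbitrary sequence in $K$; I must produce a subsequence that is $|\sigma|(E,E^{*})$-convergent to a point of $K$. First I would pass to the closed sublattice $F$ generated by $\{x_n : n \in \N\}$. Since the sublattice algebraically generated by a countable set admits a countable dense subset (take finite lattice-linear combinations of the $x_n$ with rational coefficients and use the norm-continuity of the lattice operations), $F$ is a separable Banach lattice.

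The key step is to locate inside $F$ a compact set containing the sequence. I claim the $|\sigma|(E,E^{*})$-closure $C := \overline{\{x_n : n \in \N\}}^{\,|\sigma|(E,E^{*})}$ is contained in $F$. Indeed, $F$ is in particular a norm-closed linear subspace, hence convex, so Mazur's theorem makes it weakly closed; since $\sigma(E,E^{*}) \subseteq |\sigma|(E,E^{*})$, every weakly closed set is $|\sigma|(E,E^{*})$-closed, and therefore $F$ is $|\sigma|(E,E^{*})$-closed. As $\{x_n : n \in \N\} \subset F$, it follows that $C \subset F$. Moreover, $C$ is a $|\sigma|(E,E^{*})$-closed subset of the compact set $K$, so $C$ is $|\sigma|(E,E^{*})$-compact. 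By Lemma \ref{induzida} the topology $|\sigma|(E,E^{*})$ restricted to $F$ coincides with $|\sigma|(F,F^{*})$, hence $C$ is an absolutely weakly compact subset of the separable Banach lattice $F$.

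Now I would apply Lemma \ref{absweak2} to conclude that $|\sigma|(F,F^{*})$ is metrizable on $C$. A compact metrizable space is sequentially compact, and $(x_n)_n$ lies in $C$; hence there is a subsequence $(x_{n_k})_k$ that is $|\sigma|(F,F^{*})$-convergent to some $y \in C$. Because $C \subset K$ and, again by Lemma \ref{induzida}, $|\sigma|(F,F^{*})$ agrees with the restriction of $|\sigma|(E,E^{*})$ to $F$, the subsequence $(x_{n_k})_k$ is $|\sigma|(E,E^{*})$-convergent to $y \in K$. This proves that $K$ is absolutely weakly sequentially compact.

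I expect the main obstacle to be the verification that the separable sublattice $F$ is absolutely weakly closed, since this is exactly what guarantees that the limiting point remains inside the separable lattice where metrizability is available. The resolution rests on the observation that a sublattice is a linear subspace and hence convex, so the passage from norm-closedness to weak-closedness is furnished by Mazur's theorem, and the inclusion $\sigma(E,E^{*}) \subseteq |\sigma|(E,E^{*})$ then upgrades this to $|\sigma|(E,E^{*})$-closedness. Everything else is the routine transfer between the intrinsic absolute weak topology of $F$ and the one induced from $E$ via Lemma \ref{induzida}, together with the metrization furnished by Lemma \ref{absweak2}.
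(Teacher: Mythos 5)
Your proof is correct and follows essentially the same route as the paper's: reduce to a separable closed sublattice $F$ containing the sequence, note that a suitable $|\sigma|(E,E^{*})$-closed subset of $K$ lying in $F$ is compact (the paper works with $K\cap F$ where you work with the absolute-weak closure of $\{x_n : n\in\N\}$, an immaterial difference), and then combine Lemma \ref{induzida} with the metrizability supplied by Lemma \ref{absweak2}. Your explicit justification that $F$ is absolutely weakly closed via Mazur's theorem is exactly the step the paper leaves implicit, so there is no gap.
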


\begin{proof} Let $K$ be an absolutely weakly compact subset of a Banach lattice  $E$. Given a sequence $(x_n)_n$ in $K$, put $X = \overline{ \text{span}} \{x_n: n \in \N\}$. It follows from \cite[Exercise 9, p.\,204]{alip} that there exists a separable Banach sublattice $F$ of $E$ containing $X$. Since the absolute weak topology $|\sigma|(F,F^{*})$ on $F$ coincides with the absolute weak topology $|\sigma|(E,E^{*})$ of $E$ induced on $F$ (Proposition \ref{induzida}), we have that $K \cap F$ is $|\sigma|(F, F^{*})$-compact, so $(K \cap F, |\sigma|(F, F^{*}))$ is metrizable by Lemma \ref{absweak2}. As $(x_n)_n$ is contained in $K \cap F$, there exist a subsequence $(x_{n_k})_k$ of $(x_n)_k$ and an element $x \in K \cap F$ such that $x_{n_k} \longrightarrow x$ with respect to the topology $|\sigma|(F,F^{*})$. Calling on Proposition \ref{induzida} once again we get the convergence $x_{n_k} \cvfa x$, which completes the proof.
\end{proof}

As a first application we give an example of a weakly compact set which is neither absolutely weakly compact nor absolutely weakly sequentially compact.

\begin{example}\label{priexe} \rm It is well known that the sequence $(r_n)_n$ formed by the Rademacher functions is weakly null in $L_1[0,1]$. So, $K:= \{r_n : n \in \mathbb{N}\} \cup \{0\}$ is a weakly compact subset of $L_1[0,1]$. Suppose that $K$ is absolutely weakly sequentially compact. In this case there are a subsequence $(r_{n_j})_j$ of $(r_n)_n$ and $f \in L_1[0,1]$ such that $r_{n_j} \cvfa f$. Since the weak topology is contained in $|\sigma|(E,E^{*})$, the subsequence $(r_{n_j})_j$ is weakly convergent to $f$. But $(r_{n_j})_j$ is weakly null as a subsequence of a weakly null sequence, so $f = 0$, meaning that $r_{n_j} \cvfa 0$. By Remark \ref{remconv}(ii) it follows that $(|r_{n_j}|)_j$ is weakly null. But, for every $j \in \mathbb{N}$, $|r_{n_j}|$ is the constant function equal to $1$, so $(|r_{n_j}|)_j$ converges weakly to the constant function equal to $1$. This contradiction shows that there is no $|\sigma|(E,E^{*})$-convergent subsequence of $(r_n)_n$, proving that $K$ is not absolutely weakly sequentially compact. Theorem \ref{smulian} assures that $K$ is not absolutely weakly compact.
\end{example}

The usual proof of the Eberlein part of the Eberlein-Šmulian Theorem uses the  weak$^*$ compactness of the closed unit ball of the dual of any Banach space (Alaoglu's Theorem). This led us to consider the absolute weak$^*$ topology $|\sigma|(E^{*},E)$ on the dual $E^{*}$ of a Banach lattice, which is obviously defined as the locally convex-solid topology on $E^{*}$ generated by the family $\{q_{x}: x \in E\}$ of lattice seminorms, where $q_x(x^{*}) = |x^{*}|(|x|)$ for all $x^{*} \in E^{*}$ and $x \in E$. In particular, we have that
$$\sigma(E^{*}, E) \subset |\sigma|(E^{*},E) \subset |\sigma|(E^{*}, E^{**}).$$

It just so happens that there is no Alaoglu's Theorem for  the absolute weak$^*$ topology:

\begin{proposition}\label{noalaoglu}Let $E$ be a Banach lattice. \label{absweak4}
    If $B_{E^{*}}$ is  absolutely weak$^*$ compact, then $E$ has order continuous norm.
\end{proposition}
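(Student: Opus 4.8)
The plan is to argue by contraposition: assuming that $E$ does \emph{not} have order continuous norm, I will produce a sequence in $B_{E^*}$ that obstructs compactness in $|\sigma|(E^*,E)$. First I would invoke the standard characterization of the failure of order continuity (see \cite{meyer} or \cite[Chapter 4]{alip}): if $E$ lacks order continuous norm, there is an order bounded disjoint sequence $(x_n)_n\subset E^{+}$, say $0\le x_n\le u$ for some $u\in E^{+}$, with $\inf_n\|x_n\|\ge\delta>0$. For each $n$, using $\|x_n\|\ge\delta$ and $x_n\ge 0$, I pick $0\le x_n^{*}\in B_{E^{*}}$ with $x_n^{*}(x_n)\ge\delta$ (take a norming functional $f$ for $x_n$ and replace it by $|f|$, noting that $\||f|\|=\|f\|$ and $|f|(x_n)\ge|f(x_n)|=\|x_n\|$ since $x_n\ge 0$). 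By the hypothesis that $B_{E^{*}}$ is $|\sigma|(E^{*},E)$-compact, the sequence $(x_n^{*})_n$ admits a subnet $(x_{n_\alpha}^{*})_\alpha$ with $x_{n_\alpha}^{*}\cvfea y^{*}$ for some $y^{*}\in B_{E^{*}}$; since the positive cone of $E^{*}$ is weak$^{*}$-closed and $\sigma(E^{*},E)\subseteq|\sigma|(E^{*},E)$, the limit satisfies $y^{*}\ge 0$.

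The crux is a single quantitative inequality. Disjointness of the $x_n$ together with $x_n\le u$ forces $\sum_{k=1}^{N}x_k=\bigvee_{k=1}^{N}x_k\le u$ for every $N$, so for the fixed positive functional $y^{*}$ the nonnegative series $\sum_k y^{*}(x_k)$ has partial sums bounded by $y^{*}(u)$; hence $y^{*}(x_n)\to 0$, and therefore $y^{*}(x_{n_\alpha})\to 0$ as well, since every subnet of a convergent sequence retains its limit. On the other hand, $|\sigma|(E^{*},E)$-convergence of the subnet yields $|x_{n_\alpha}^{*}-y^{*}|(u)\to 0$. I would then estimate, using $0\le x_{n_\alpha}\le u$,
\[
|x_{n_\alpha}^{*}-y^{*}|(u)\ \ge\ |x_{n_\alpha}^{*}-y^{*}|(x_{n_\alpha})\ \ge\ (x_{n_\alpha}^{*}-y^{*})(x_{n_\alpha})\ =\ x_{n_\alpha}^{*}(x_{n_\alpha})-y^{*}(x_{n_\alpha})\ \ge\ \delta-y^{*}(x_{n_\alpha}).
\]
Passing to the limit gives $0\ge\delta$, a contradiction, so $E$ must have order continuous norm.

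The main obstacle is precisely to make the uniform lower bound $x_n^{*}(x_n)\ge\delta$ and the vanishing $y^{*}(x_n)\to 0$ collide inside one quantity controlled by the topology; the right vehicle is the seminorm value $|x_{n_\alpha}^{*}-y^{*}|(u)$, whose convergence to $0$ is exactly what $|\sigma|(E^{*},E)$-compactness supplies through the limit $y^{*}$. The two facts that make this work are lattice-theoretic rather than topological: that an order bounded disjoint positive sequence has partial sums dominated by $u$ (so \emph{every} positive functional is summable on it, forcing $y^{*}(x_n)\to 0$), and that the modulus dominates a functional on positive elements (so the lower bound $\delta$ survives in the estimate). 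I expect the only points needing care to be the passage from the sequence $(x_n^{*})_n$ to a genuine subnet and the verification that $y^{*}(x_{n_\alpha})\to 0$ along it, both of which are immediate once one recalls that a subnet of a convergent sequence converges to the same limit.
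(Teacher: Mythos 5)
Your proof is correct, but it takes a genuinely different route from the paper's. You argue by contraposition with an explicit obstruction: the failure of order continuity yields an order bounded disjoint sequence $0\le x_n\le u$ with $\|x_n\|\ge\delta$, and you show that the associated positive norming functionals $x_n^*\in B_{E^*}$ admit no $|\sigma|(E^*,E)$-convergent subnet, because the seminorm value $|x_{n_\alpha}^*-y^*|(u)$ is squeezed between $0$ and $\delta-y^*(x_{n_\alpha})$, where $y^*(x_n)\to 0$ by the summability of any positive functional along an order bounded disjoint sequence ($\sum_{k=1}^N x_k=\bigvee_{k=1}^N x_k\le u$). All the individual steps check out: the characterization of order continuity via order bounded disjoint sequences is standard (\cite[Theorem 4.14]{alip}), the positive cone of $E^*$ is weak$^*$-closed so $y^*\ge 0$, and a subnet of the null sequence $(y^*(x_n))_n$ still tends to $0$. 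The paper instead proceeds directly: compactness of $B_{E^*}$ forces the identity $(B_{E^*},|\sigma|(E^*,E))\to(B_{E^*},\sigma(E^*,E))$ to be a homeomorphism, from which $(E^*,|\sigma|(E^*,E))^*=E$, so that $|\sigma|(E^*,E)$ is consistent with the dual pair $\langle E,E^*\rangle$, and order continuity then follows from \cite[Theorems 8.60 and 9.22]{aliprantis}. The paper's argument is shorter but leans on the duality theory of locally solid topologies, and it yields the identification $(E^*,|\sigma|(E^*,E))^*=E$ as a byproduct; yours is more elementary and quantitative, needs only the lattice facts you cite, and actually proves slightly more, namely that $B_{E^*}$ fails to be $|\sigma|(E^*,E)$-countably compact (the specific sequence $(x_n^*)_n$ has no absolute weak$^*$ cluster point in $B_{E^*}$), which in view of Example \ref{ex77} is the sharper obstruction.
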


\begin{proof}  Since $\sigma(E^{*},E) \subset |\sigma|(E^{*}, E)$, the identity operator
    $$ Id: (B_{E^{*}}, |\sigma|(E^{*},E)) \longrightarrow (B_{E^{*}}, \sigma(E^{*},E) ) $$
    is continuous. As $B_{E^{*}}$ is assumed to be $|\sigma|(E^{*},E)$-compact and the weak$^*$ topology $\sigma(E^{*},E)$ is Hausdorff, we obtain by \cite[Theorem 2.36]{aliprantis} that $Id$ is a homeomorphism. We claim now that $(E^{*}, |\sigma|(E^{*}, E))^{*} = E$. Indeed, on the one hand, given a weak$^*$ continuous linear functional $x^{**}\colon E^{*} \longrightarrow \R$, for every open subset $U$ of  $\R$ we have
    $$(x^{**})^{-1}(U) \in \sigma(E^{*},E) \subset |\sigma|(E^{*},E),$$ showing that $x^{**}$ is $|\sigma|(E^{*},E)$-continuous. On the other hand, if $x^{**} \colon E^{*} \longrightarrow \R$ is a $|\sigma|(E^{*},E)$-continuous linear functional, the restriction of $x^{**}$ to $B_{E^{*}}$ is $|\sigma|(E^{*},E)$-continuous as well. Calling on the inclusion $\sigma(E^{*},E) \subset |\sigma|(E^{*}, E)$ once again, we get that $x^{**}|_{B_{E^{*}}}$ is weak*-continuous. By \cite[Corollary 3.94]{fabian}, $x^{**}$ is weak*-continuous. This proves that
    $$(E^{*}, |\sigma|(E^{*}, E))^{*} = (E^{*}, \sigma(E^{*},E))^{*} = E,$$
where the last equality is well known for Banach spaces. Thus $|\sigma|(E^{*},E)$ is a topology on $E^{*}$ consistent to the dual pair $< E, E^{*} >$, and so $E$ has order continuous norm (see \cite[Theorems 8.60 and 9.22]{aliprantis}).
\end{proof}

Before proceeding, let us see that the Banach lattice $E$ having order continuous norm is not sufficient for $B_{E^{*}}$  to be an absolute weak* compact subset of $E^{*}$:

\begin{example}\label{ex77}\rm Let $1< p < \infty$. The Banach lattice $L_p[0,1]$ has order continuous norm because it is reflexive. Using that the Rademacher sequence $(r_n)_n$ is weakly null in $L_p[0,1]$, the same argument we used in Example \ref{priexe} shows that $(r_n)_n$ has no absolutely weakly null subsequence. So, $B_{L_p[0,1]}$ is not absolutely weakly sequentially compact. 
Theorem \ref{smulian} gives that $B_{L_p[0,1]}$ fails to be absolute weakly compact. Since the absolute weak topology and the absolute weak$^*$ topologies coincide on reflexive Banach lattices, we conclude that $B_{L_p[0,1]}$ is not absolute weak$^*$ compact.
\end{example}

We shall prove a partial converse of Theorem \ref{smulian}, that is, a partial Eberlein part of the Eberlein-Šmulian Theorem. We just need one more tool.

\begin{lemma}\label{absweak5} Let $J \colon E \longrightarrow E^{**}$  be the canonical embedding from a Banach lattice to its bidual. Then
    $$ J \colon (E, |\sigma|(E,E^{*})) \longrightarrow (J(E), |\sigma|(E^{**},E^{*})) $$ is an homeomorphism.
\end{lemma}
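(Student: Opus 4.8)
The plan is to verify that $J$ transports the seminorms generating $|\sigma|(E,E^{*})$ exactly onto the restrictions to $J(E)$ of the seminorms generating $|\sigma|(E^{**},E^{*})$; since $J$ is a linear bijection of $E$ onto $J(E)$, such a matching of defining seminorms forces $J$ to be a homeomorphism. The one structural input I would invoke is the standard fact that the canonical embeddings $J\colon E\to E^{**}$ and $E^{*}\hookrightarrow E^{***}$ are isometric lattice homomorphisms (see, e.g., \cite{meyer}); in particular $|J(x)| = J(|x|)$ in $E^{**}$ for every $x\in E$.

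Next I would unwind the defining seminorms of the two topologies. Because $p_{x^{*}} = p_{|x^{*}|}$ and the embedding $E^{*}\hookrightarrow E^{***}$ preserves moduli, both $|\sigma|(E,E^{*})$ and $|\sigma|(E^{**},E^{*})$ are already generated by the sub-families indexed by the positive functionals $0\le x^{*}\in E^{*}$. For such an $x^{*}$ the generating seminorm on $E$ is $p_{x^{*}}(x) = x^{*}(|x|)$, whereas on $E^{**}$, reading $x^{*}$ through its positive canonical image in $E^{***}$, it is $p_{x^{*}}(x^{**}) = |x^{**}|(x^{*})$, the modulus now being computed in the bidual lattice $E^{**}$.

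The decisive computation then reads, for every $x\in E$ and every $0\le x^{*}\in E^{*}$,
$$ p_{x^{*}}(J(x)) = |J(x)|(x^{*}) = J(|x|)(x^{*}) = x^{*}(|x|) = p_{x^{*}}(x). $$
Thus each generating seminorm of the domain coincides, under $J$, with the trace on $J(E)$ of the corresponding generating seminorm of the codomain. Consequently $J$ sends each basic $|\sigma|(E,E^{*})$-neighbourhood of $0$ bijectively onto the intersection of $J(E)$ with the matching basic $|\sigma|(E^{**},E^{*})$-neighbourhood of $0$, and being a linear bijection onto $J(E)$ it is therefore a homeomorphism.

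The only genuinely delicate point is the middle equality $|J(x)| = J(|x|)$: the modulus of $J(x)$ must be taken in the bidual lattice $E^{**}$ rather than in $E$, and it is precisely the lattice-homomorphism property of the canonical embedding that guarantees the two agree — without it the identity of the seminorms, and hence the whole statement, would break down. Everything else amounts to routinely matching the defining seminorms, so I expect the lattice-homomorphism property of $J$ to be the one nontrivial ingredient.
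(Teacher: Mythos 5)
Your proof is correct and follows essentially the same route as the paper's: both reduce the statement to the identity $|J(x)|(|x^{*}|)=|x^{*}|(|x|)$, which rests on the canonical embedding being a lattice homomorphism. The paper phrases this via nets converging to zero while you phrase it via matching the generating seminorms, but the substance is identical.
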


\begin{proof} Since $J$ is a linear map between two topological vector spaces, it is enough to prove that $J$ and its inverse $J^{-1} \colon  J(E) \longrightarrow E$ are continuous at zero with respect to the prescribed topologies.  For a net $(x_\alpha)_\alpha$ in $E$, we have
    $$ x_\alpha \cvfa 0 \iff J(|x_\alpha|)(|x^{*}|) = |x^{*}|(|x_\alpha|) \longrightarrow 0 \text{~for every~} x^{*} \in E^{*}. $$
 The fact that the canonical embedding is a lattice homomorphism (see \cite[Proposition 1.4.5]{meyer}), we obtain that
    $$ x_\alpha \cvfa 0 \iff |J(x_\alpha)|(|x^{*}|) \longrightarrow 0 \text{~for every~} x^{*} \in E^{*} \iff J(x_\alpha) \cvfdea 0, $$
which completes the proof. 
\end{proof}

\begin{theorem}\label{eberlein} Let $K$ be an absolutely weakly sequentially compact subset of a Banach lattice $E$. If $E$ is separable or $B_{E^{**}}$ is absolutely weak$^*$ compact, then $K$ is absolutely weakly compact.
\end{theorem}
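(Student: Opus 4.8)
The plan is to treat the two hypotheses separately, since they call for quite different mechanisms, while both ultimately exploit the homeomorphism of Lemma \ref{absweak5} together with the fact that $K$ is already weakly compact. Indeed, because absolute weak convergence implies weak convergence (Remark \ref{remconv}(i)), absolute weak sequential compactness of $K$ gives weak sequential compactness, so the classical Eberlein--Šmulian theorem makes $K$ weakly compact, and in particular norm bounded; say $K \subseteq r B_E$. For the separable case the argument is short: by Lemma \ref{absweak2} the topology $|\sigma|(E,E^*)$ is metrizable on $K$, and in a metrizable space sequential compactness coincides with compactness, so $K$ is absolutely weakly compact at once.

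The substance is the case where $B_{E^{**}}$ is absolutely weak$^*$ compact. Here I would first pass to the bidual: by Lemma \ref{absweak5} it suffices to prove that $J(K)$ is $|\sigma|(E^{**},E^*)$-compact, where $J\colon E \to E^{**}$ is the canonical embedding. Two observations feed into this. First, $J$ is continuous from $(E,\sigma(E,E^*))$ to $(E^{**},\sigma(E^{**},E^*))$, so the weak compactness of $K$ yields that $J(K)$ is weak$^*$ compact in $E^{**}$; and since $K \subseteq r B_E$, we have $J(K) \subseteq r B_{E^{**}}$. Second --- and this is the key point --- the hypothesis forces the absolute weak$^*$ and the weak$^*$ topologies to agree on $B_{E^{**}}$: the identity $(B_{E^{**}},|\sigma|(E^{**},E^*)) \to (B_{E^{**}},\sigma(E^{**},E^*))$ is a continuous bijection from a compact space onto a Hausdorff one (using $\sigma(E^{**},E^*) \subseteq |\sigma|(E^{**},E^*)$), hence a homeomorphism, exactly as in the proof of Proposition \ref{noalaoglu}. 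By scaling, the same coincidence holds on $r B_{E^{**}}$.

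Combining these, $J(K)$ is weak$^*$ compact and sits inside $r B_{E^{**}}$, where the weak$^*$ and absolute weak$^*$ topologies induce the same subspace topology; hence $J(K)$ is $|\sigma|(E^{**},E^*)$-compact, and pulling back through the homeomorphism of Lemma \ref{absweak5} shows that $K$ is absolutely weakly compact. The point I expect to be the main obstacle is as much psychological as technical: the usual route to the Eberlein half runs an Eberlein--Whitley construction reducing to countably many functionals, and adapting that to the solid seminorms $|x^*|(|\cdot|)$ looks delicate. The plan deliberately sidesteps it by extracting weak compactness of $K$ for free and letting the compact-to-Hausdorff argument collapse $|\sigma|(E^{**},E^*)$ onto $\sigma(E^{**},E^*)$ on the ball; the whole case therefore stands or falls on the hypothesis that $B_{E^{**}}$ is absolutely weak$^*$ compact, which is precisely what makes that collapse legal.
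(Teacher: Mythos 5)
Your proof is correct, and in the non-separable case it takes a genuinely different (and arguably cleaner) route than the paper. The paper also reduces to showing that $J(K)$ is $|\sigma|(E^{**},E^{*})$-compact, but it does so by proving $J(K)$ is $|\sigma|(E^{**},E^{*})$-closed inside the compact set $C\cdot B_{E^{**}}$: for each $x^{**}$ in the closure it invokes Whitley's construction to produce a sequence in $K$, extracts an absolutely weakly convergent subsequence, and concludes $x^{**}\in J(K)$. You instead observe that the hypothesis itself forces $|\sigma|(E^{**},E^{*})$ and $\sigma(E^{**},E^{*})$ to induce the same topology on $B_{E^{**}}$ (continuous bijection from a compact space onto a Hausdorff one, the same \cite[Theorem 2.36]{aliprantis} argument used in Proposition \ref{noalaoglu}), hence on $rB_{E^{**}}$ by scaling; then the weak compactness of $K$ --- which comes for free from absolute weak sequential compactness via the classical Eberlein--\v{S}mulian theorem, as the paper itself notes --- transfers to weak$^{*}$ compactness of $J(K)\subseteq rB_{E^{**}}$ and therefore to $|\sigma|(E^{**},E^{*})$-compactness, after which Lemma \ref{absweak5} finishes as in the paper. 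What your approach buys is the elimination of Whitley's Lemma and of any sequential extraction in this half of the argument; it also makes explicit the (strong) structural consequence of the hypothesis, namely that the absolute weak$^{*}$ and weak$^{*}$ topologies agree on bounded subsets of $E^{**}$, so that under this hypothesis weakly compact subsets of $E$ are automatically absolutely weakly compact. What the paper's route buys is that it isolates exactly where the sequential hypothesis on $K$ enters, at the cost of the extra machinery. Both arguments need the norm boundedness of $K$, which you obtain from weak compactness and the paper obtains by a direct contradiction argument.
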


\begin{proof}  If $E$ is a separable Banach lattice, the result follows from Lemma \ref{absweak2} and the fact that compactness and sequential compactness are equivalent on metrizable spaces. 

Suppose that $B_{E^{**}}$ is a $|\sigma|(E^{**}, E^{*})$-compact set. Let us stress first that $K$ is norm bounded. Indeed, if $K$ were not norm bounded, there would exist a sequence $(x_n)_n$ in $ K$ such that $\|x_n\|  \geq n$ for every $n \in \N$. By the absolute weak sequential compactness of $K$, there would exist a subsequence $(x_{n_k})_k$ of $(x_n)_n$ and $x \in K$ such that $x_{n_k} \cvfa x$, that is, $|x_{n_k}| \stackrel{\omega}{\longrightarrow} |x|$. This implies that $(|x_{n_k}|)_k$ is a bounded sequence, which is a contradiction because $\||x_{n_k}|\|= \|x_{n_k}\| \geq k$ for every $k \in \N$. This establishes that $K$ is norm bounded. Now, if $J \colon E \longrightarrow E^{**}$ denotes the canonical embedding, $J(K)$ is a norm bounded subset of $E^{**}$, so there exists $C > 0$ such that $J(K) \subset C \cdot B_{E^{**}}$. On the other hand, the $|\sigma|(E^{**}, E^{*})$ compactness of $B_{E^{**}}$ gives that $C \cdot B_{E^{**}}$ is $|\sigma|(E^{**}, E^{*})$-compact as well. It follows that $\overline{J(K)}^{|\sigma|(E^{**}, E^{*})}$ is $|\sigma|(E^{**}, E^{*})$-compact.
     Let $x^{**} \in \overline{J(K)}^{|\sigma|(E^{**},E^{*})} $ be given. From the fact that $\sigma(E^{**}, E^{*}) \subset |\sigma|(E^{**}, E^{*})$ we know that $x^{**} \in \overline{J(K)}^{\sigma(E^{**},E^{*})}$. Let $(x_n)_n \subset K$ be the sequence given by Whitley's Lemma (see \cite[Lemma 3.39]{alip}) associated to $x^{**}$. As $K$ is absolutely weakly sequentially compact, there exist a subsequence $(x_{n_k})_k$ of $(x_n)_n$ and $x \in K$ such that $x_{n_k}  \cvfa x$. This implies that $x$ is a weak accumulation point of $\{x_n: n \in \N\}$; therefore, by Whitley's Lemma, $x^{**} = J(x) \in J(K)$. We have just proved that $J(K) = \overline{J(K)}^{|\sigma|(E^{**},E^{*})}$, which gives that $J(K)$ is $|\sigma|(E^{**}, E^{*})$-compact. By Lemma \ref{absweak5} we conclude that $K$ is absolutely weakly compact.
\end{proof}

To establish the usefulness of the theorem above in the nonseparable case, we should give examples of nonseparable Banach lattices $E$ for which  $B_{E^{**}}$ is absolutely weak$^*$ compact.

\begin{proposition} Let $E$ be Banach lattice such that $E^{*}$ and $E^{**}$ have order continuous norms and $E^{**}$ is atomic. Then $B_{E^{**}}$ is absolutely weak$^*$ compact. In particular, $B_{E^{**}}= B_{E}$ is absolutely weak$^*$ compact for every reflexive atomic Banach lattice.
\end{proposition}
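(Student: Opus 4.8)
The plan is to deduce the $|\sigma|(E^{**},E^{*})$-compactness of $B_{E^{**}}$ from its ordinary weak$^*$ compactness. By Alaoglu's Theorem the space $(B_{E^{**}},\sigma(E^{**},E^{*}))$ is compact, so it suffices to prove that the identity map
\[
\mathrm{Id}\colon (B_{E^{**}},\sigma(E^{**},E^{*}))\longrightarrow(B_{E^{**}},|\sigma|(E^{**},E^{*}))
\]
is continuous; then $B_{E^{**}}$, being the continuous image of a compact space, is $|\sigma|(E^{**},E^{*})$-compact. Since $\sigma(E^{**},E^{*})\subset|\sigma|(E^{**},E^{*})$ always holds, the nontrivial content of this continuity is that weak$^*$ convergence forces absolute weak$^*$ convergence on the bounded set $B_{E^{**}}$. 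By translation invariance it is enough to show that every norm bounded, weak$^*$-null net $(z_\alpha)_\alpha$ in $E^{**}$ satisfies $|x^{*}|(|z_\alpha|)\to0$ for every $0\le x^{*}\in E^{*}$, that is, that $(|z_\alpha|)_\alpha$ is weak$^*$-null.

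The structural input comes from the hypotheses. Since $E^{*}$ and $E^{**}=(E^{*})^{*}$ both have order continuous norms and $E^{**}$ is atomic, a duality result for discreteness under order continuity (see \cite{meyer}) gives that $E^{*}$ is itself atomic; let $(f_i)_{i\in I}$ be its normalized (positive) atoms. Because $E^{*}$ has order continuous norm it is an ideal in its bidual $E^{***}$, so each $f_i$ remains an atom of $E^{***}=(E^{**})^{*}$. Consequently the evaluation $g\mapsto g(f_i)$ is a lattice homomorphism on $E^{**}$, whence
\[
|g|(f_i)=|g(f_i)|\qquad\text{for all }g\in E^{**}\text{ and all }i\in I .
\]
This identity is the device that converts weak$^*$ information into absolute weak$^*$ information: testing $|g|$ against an atom costs no more than testing $g$ against it.

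With this in hand the estimate is routine. Fix $0\le x^{*}\in E^{*}$ and $\varepsilon>0$. Order continuity of $E^{*}$ yields the unconditionally convergent expansion $x^{*}=\sum_{i\in I}\lambda_i f_i$ with $\lambda_i\ge 0$, so there is a finite $J\subset I$ with $r:=x^{*}-\sum_{i\in J}\lambda_i f_i\ge 0$ and $\|r\|<\varepsilon$. If $(z_\alpha)_\alpha$ is weak$^*$-null with $\|z_\alpha\|\le M$ for all $\alpha$, then
\[
x^{*}(|z_\alpha|)=\sum_{i\in J}\lambda_i\,|z_\alpha|(f_i)+r(|z_\alpha|)
=\sum_{i\in J}\lambda_i\,|z_\alpha(f_i)|+r(|z_\alpha|).
\]
The first sum is finite and each $z_\alpha(f_i)\to 0$, while the remainder obeys $r(|z_\alpha|)\le\|r\|\,\||z_\alpha|\|=\|r\|\,\|z_\alpha\|\le \varepsilon M$. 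Hence $\limsup_\alpha x^{*}(|z_\alpha|)\le \varepsilon M$, and letting $\varepsilon\to 0$ gives $x^{*}(|z_\alpha|)\to 0$, as required. The ``in particular'' assertion is immediate: a reflexive atomic Banach lattice has $E=E^{**}$ with $E$ and $E^{*}$ reflexive, hence order continuous, and $E^{*}$ atomic by the same duality, so the hypotheses hold and $B_{E^{**}}=B_{E}$.

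I expect the main obstacle to be the two structural claims of the second paragraph — pushing atomicity from $E^{**}$ down to $E^{*}$ and verifying that the atoms of $E^{*}$ act as lattice homomorphisms on $E^{**}$ — because it is exactly the order continuity of both $E^{*}$ and $E^{**}$, rather than any metrizability or separability, that makes the weak$^*$ and absolute weak$^*$ topologies agree on the ball.
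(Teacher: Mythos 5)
Your proof is correct, and it follows the same overall strategy as the paper --- start from Alaoglu's theorem and upgrade the weak$^*$ compactness of $B_{E^{**}}$ to absolute weak$^*$ compactness by showing that on bounded subsets of $E^{**}$ weak$^*$ convergence already forces absolute weak$^*$ convergence --- but it handles the crucial step by a genuinely different, self-contained argument. The paper extracts a weak$^*$ convergent subnet from an arbitrary net in $B_{E^{**}}$ and then simply cites \cite[Theorems 8.1 and 8.4]{kandic} (results on the unbounded norm topology) to convert the weak$^*$ convergence of the bounded net $(x^{**}_\beta - x^{**})_\beta$ into $|\sigma|(E^{**},E^{*})$-convergence. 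You instead prove that implication from first principles: $E^{*}$ is atomic by the duality of atomicity under order continuous norms, its atoms remain atoms of $E^{***}$ because order continuity makes $E^{*}$ an ideal in its bidual, atoms of a dual lattice act as lattice homomorphisms (so $|g|(f_i)=|g(f_i)|$), and a finite-atoms-plus-small-remainder decomposition of a positive $x^{*}$ yields the estimate, with the norm bound of the net used exactly where it must be, namely on the remainder term. All the structural facts you flag as the main obstacles are indeed standard (the lattice-homomorphism characterization of atoms of the dual and the ideal property of order continuous lattices in their biduals are in the references \cite{alip} and \cite{meyer}), so there is no gap. What your route buys is a transparent proof of the coincidence of the two topologies on bounded sets that makes visible why both order continuity hypotheses and atomicity are needed; what the paper's route buys is brevity and an explicit link to the un-topology literature. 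One purely notational caution: the seminorms of $|\sigma|(E^{**},E^{*})$ are $g\mapsto |g|(|x^{*}|)$, so expressions such as $x^{*}(|z_\alpha|)$ must be read as the pairing $|z_\alpha|(x^{*})$; your computation does this correctly, but the roles of functional and element are written in reversed order.
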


\begin{proof} Let $(x^{**}_{\alpha})_\alpha$ be a net in $B_{E^{**}}$. From Alaoglu's Theorem there exist a subnet $(x^{**}_\beta)_\beta$ of $(x^{**}_\alpha)_\alpha$ and an element $x^{**} \in B_{E^{**}}$ such that $x^{**}_\beta \stackrel{\omega^*}{\longrightarrow} x^{**}$ in $E^{**}$. In particular, $(x^{**}_\beta - x^{**}) \stackrel{\omega^*}{\longrightarrow} 0$ in $E^{**}$. The assumptions on $E^{*}$ and $E^{**}$ imply, by \cite[Theorems 8.4 and 8.1]{kandic}, that  
$(x^{**}_\beta - x^{**}) \stackrel{|\sigma|(E^{**},E^{*})}{\longrightarrow} 0$, that is, $x^{**}_\beta   \stackrel{|\sigma|(E^{**},E^{*})}{\longrightarrow} x^{**}$. Therefore every net in $B_{E^{**}}$ has an absolute weak* convergent subnet to an element of $B_{E^{**}},$ which implies that $B_{E^{**}}$ is $|\sigma|(E^{**}, E^{*})$-compact. The second assertion follows from the fact that reflexive Banach lattices have order continuous norms.
\end{proof}

\begin{example}\rm Let $\Gamma$ be an uncountable set and $1 < p < \infty$. Then $\ell_p(\Gamma)$ is a nonseparable reflexive atomic Banach lattice (for the nonseparability, see \cite[p.\,23]{fabian}). By the proposition above,
$$\{\ell_p(\Gamma) : 1 < p < \infty \mbox{~and~} \Gamma \mbox{ is uncountable}\} $$
is a quite large family of nonseparable Banach lattices $E$ for which $B_E = B_{E^{**}}$ is absolutely weak$^*$ compact.
\end{example}

\section{Grothendieck's compactness principle}

As announced in the Introduction, the purpose of this section is to prove that an analogue of Grothendieck's compactness principle for the weak topology in a Banach space holds for the absolute weak topology in a Banach lattice by replacing the Schur property with the positive Schur property. The two main theorems of the previous section shall be used. As an application, we prove a new characterization of the dual positive Schur property by means of the absolute weak$^*$ topology.

Recall that a Banach lattice $E$ has the positive Schur property if positive (or, equivalently, disjoint or positive disjoint) weakly null sequences in $E$ are norm null. This property was introduced by W. Wnuk \cite{wnukglasgow, wnuksurv} and 
F. R\"abiger \cite{rabiger} and has been extensively studied for many experts, recent developments can be found, e.g., in \cite{ardakani, baklouti, botelhobu, pams, botelholuiz,  pedro}.

The following connections between the positive Schur property and the absolute weak topology shall be useful. The proof follows easily from the facts stated in Remark \ref{remconv}.

\begin{proposition} \label{psp1} The following are equivalent for a Banach lattice $E$: \\
{\rm (a)} $E$ has the positive Schur property.\\
{\rm (b)} Absolutely weakly null sequences in $E$ are norm null.\\
{\rm (c)} Positive absolutely weakly null sequences in $E$ are norm null.\\
{\rm (d)} Positive disjoint absolutely weakly null sequences in $E$ are norm null.\\
{\rm (e)} Disjoint absolutely weakly null sequences in $E$ are norm null.
\end{proposition}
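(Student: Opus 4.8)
The plan is to reduce each of the statements (b)--(e) to the positive Schur property (a), using Remark~\ref{remconv} as the sole translation device between absolute weak nullity of a sequence and weak nullity of the associated positive sequences. The guiding principle is that, as far as norm convergence is concerned, an absolutely weakly null sequence may always be replaced by the positive sequence of its absolute values, which has exactly the same norms.

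First I would dispose of the equivalences (a)$\Leftrightarrow$(c), (a)$\Leftrightarrow$(d) and (a)$\Leftrightarrow$(e), each of which is an immediate restatement. By Remark~\ref{remconv}(ii) a positive sequence is absolutely weakly null if and only if it is weakly null, so (c) asserts precisely that positive weakly null sequences are norm null, which is the definition of (a). The same remark shows that a positive disjoint sequence is absolutely weakly null exactly when it is weakly null, so (d) is the assertion that positive disjoint weakly null sequences are norm null; likewise, Remark~\ref{remconv}(iii) identifies (e) with the assertion that disjoint weakly null sequences are norm null. Since the positive Schur property admits these positive-disjoint and disjoint reformulations (as recalled in the paragraph preceding the statement), both (d) and (e) are thereby seen to be equivalent to (a).

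It then remains to bring in (b), the only item concerning arbitrary, not necessarily positive, absolutely weakly null sequences. The implication (b)$\Rightarrow$(c) is trivial, positive absolutely weakly null sequences forming a subclass of all absolutely weakly null sequences. For the converse I would argue as follows: assuming (a) and given an absolutely weakly null sequence $(x_n)_n$, pass to the positive sequence $(|x_n|)_n$, which is weakly null by Remark~\ref{remconv}(ii); the positive Schur property then forces $\||x_n|\| \to 0$, and since $\||x_n|\| = \|x_n\|$ for every $n$, the sequence $(x_n)_n$ is norm null. This yields (a)$\Rightarrow$(b) and closes the chain.

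The argument is therefore essentially bookkeeping among restrictions of a single statement, and I anticipate no genuine obstacle. The only step carrying real content is the passage to absolute values, i.e.\ the fact that $(|x_n|)_n$ is weakly null whenever $(x_n)_n$ is absolutely weakly null; but this is granted by Remark~\ref{remconv}(ii), which in turn rests on \cite[Proposition 1.3]{wnukdual} for the disjoint case. With that remark in hand, every implication above is immediate.
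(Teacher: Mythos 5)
Your proof is correct and follows exactly the route the paper intends: the paper gives no written argument beyond the remark that the proposition "follows easily from the facts stated in Remark \ref{remconv}," and your bookkeeping via Remark \ref{remconv}(ii)--(iii), together with the identity $\||x_n|\|=\|x_n\|$ and the recalled disjoint/positive-disjoint reformulations of the positive Schur property, is precisely that argument spelled out.
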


Now we need some preparatory lemmas.

\begin{lemma} \label{absweak7}
     For every absolutely weakly null sequence $(x_n)_n$ in a Banach lattice $E$, the set
     $$\left\{\displaystyle \sum_{n=1}^\infty a_n x_n: a_n\in\mathbb{R} \mbox{~for every~} n\in\mathbb{N} \mbox{~and~} \displaystyle \sum_{n=1}^\infty |a_n| \leq 1\right\}$$
     is absolutely weakly sequentially compact, hence norm closed.
\end{lemma}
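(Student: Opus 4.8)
The plan is to prove absolute weak sequential compactness directly and then deduce norm closedness from it. First I would record two preliminary facts. Since $(x_n)_n$ is absolutely weakly null it is weakly null (Remark \ref{remconv}(i)), hence norm bounded; set $M = \sup_n \|x_n\|$. Consequently, for any scalars with $\sum_n |a_n| \le 1$ the series $\sum_n a_n x_n$ converges in norm, because $\sum_n \|a_n x_n\| \le M \sum_n |a_n| \le M$, so the set $S$ in the statement is well defined and norm bounded. I also record that for each fixed $0 \le x^* \in E^*$ one has $x^*(|x_n|) \to 0$ as $n \to \infty$, which is merely a restatement of the absolute weak nullity of $(x_n)_n$.

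Now take an arbitrary sequence $(y^{(k)})_k$ in $S$, say $y^{(k)} = \sum_n a_n^{(k)} x_n$ with $\sum_n |a_n^{(k)}| \le 1$ for every $k$. Each coordinate sequence $(a_n^{(k)})_k$ lies in $[-1,1]$, so a diagonal argument produces a subsequence $(k_j)_j$ along which $a_n^{(k_j)} \to a_n$ for every $n$. Summing finitely many coordinates first and then letting the index grow shows $\sum_n |a_n| \le 1$, so $y := \sum_n a_n x_n$ is a genuine element of $S$; this is the candidate absolute weak limit.

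The heart of the proof is to show $y^{(k_j)} \cvfa y$. Writing $b_n^{(j)} := a_n^{(k_j)} - a_n$, I would combine the lattice inequality $\bigl|\sum_n b_n^{(j)} x_n\bigr| \le \sum_n |b_n^{(j)}|\,|x_n|$ with the positivity of $x^*$ to obtain, for every $0 \le x^* \in E^*$,
$$ x^*\bigl(|y^{(k_j)} - y|\bigr) \le \sum_n |b_n^{(j)}|\, x^*(|x_n|). $$
Here $\sum_n |b_n^{(j)}| \le 2$ for all $j$, each $b_n^{(j)} \to 0$ as $j \to \infty$, and the weights $c_n := x^*(|x_n|)$ tend to $0$. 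Given $\varepsilon > 0$ I would choose $N$ with $c_n < \varepsilon$ for $n > N$, split the sum at $N$, bound the tail by $\varepsilon \sum_{n>N} |b_n^{(j)}| \le 2\varepsilon$, and note that the head $\sum_{n=1}^N |b_n^{(j)}| c_n$ is a fixed finite sum whose terms vanish as $j \to \infty$. Letting $j \to \infty$ and then $\varepsilon \to 0$ gives $x^*(|y^{(k_j)} - y|) \to 0$, that is, $y^{(k_j)} \cvfa y$. This is precisely where absolute weak nullity enters, and it is the step that requires care: the hard part will be the uniform-in-$j$ control of the $\ell_1$ mass of $(b_n^{(j)})_n$ that allows the decay $c_n \to 0$ to absorb the tail of the sum.

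Finally, norm closedness follows from the sequential compactness just established. If $z$ is the norm limit of a sequence $(z_k)_k \subset S$, then by the above some subsequence satisfies $z_{k_j} \cvfa w$ for a suitable $w \in S$; since absolute weak convergence implies weak convergence (Remark \ref{remconv}(i)), we have $z_{k_j} \cvf w$, while norm convergence gives $z_{k_j} \cvf z$. Uniqueness of weak limits forces $w = z$, whence $z \in S$, so $S$ is norm closed.
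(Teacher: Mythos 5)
Your proposal is correct and follows essentially the same route as the paper's proof: a diagonal argument to extract a coordinatewise-convergent subsequence with $\sum_n|a_n|\le 1$, followed by a split of $\sum_n |a_n^{(k_j)}-a_n|\,x^*(|x_n|)$ into a finite head (vanishing by coordinatewise convergence) and a tail (controlled by the uniform $\ell_1$ bound $\le 2$ together with $x^*(|x_n|)\to 0$). Your explicit uniqueness-of-weak-limits argument for norm closedness fills in a step the paper leaves as ``easy to see,'' but the substance is identical.
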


\begin{proof} Call $K$ the set whose absolute weak sequential compactness we have to prove. Let $(z_k)_{k}$ be a sequence in $K$. 
 For each $k \in \N$ there exists a sequence of real numbers $(a_{k,n})_n$ such that $\sum\limits_{n=1}^\infty |a_{k,n}| \leq 1$ and $z_k = \sum\limits_{n=1}^\infty a_{k,n} x_n$. By a standard diagonal argument (see, e.g., \cite[p.\,325]{arazi}, \cite[p.\,621]{dean} or \cite[p.\,233]{kim}), we can find a subsequence $(k_j)_j$ of $\N$ and a sequence $(a_n)_n$ such that $\lim\limits_{j \to \infty}a_{k_j,n} = a_n$ for every $n \in \N$ and $\sum\limits_{n=1}^\infty |a_n| \leq 1$.  We claim that $z_{k_j} \cvfa z = \sum\limits_{n=1}^\infty a_n x_n$.
    Given $\varepsilon > 0$ and $0 \leq x^* \in E^*$, since $x_n \cvfa 0$, there exists $N_0 \in \N$ such that $x^*(|x_n|) < \varepsilon/ 4 $ for every $n \geq N_0$. On the other hand, as $a_{k_j,n} \stackrel{j}{\longrightarrow} a_n$  for every $n \in \N$, for each $n = 1, \dots N_0$, there exists $j_n \in \N$ such that $|a_{k_j, n} - a_n| < \varepsilon/(2N_0 L)$ for every $j \geq j_n$, where $L = \sup\limits_{n \in \N} x^*(|x_n|)$. Thus, for $j \geq \max \{ j_1, \dots, j_{N_0}\}$, we have
    \begin{align*}
        x^*(|z_{k_j} - z|) & = x^* \left ( \left | \displaystyle \sum_{n=1}^\infty a_{k_j,n} x_n - \displaystyle \sum_{n=1}^\infty a_n x_n \right | \right ) \\
        & \leq \displaystyle \sum_{n=1}^{N_0} |a_{k_j,n} - a_n| \, x^* (|x_n|) +  \displaystyle \sum_{n=N_0 + 1}^{\infty} |a_{k_j,n} - a_n| \, x^* (|x_n|) \\
        & < \displaystyle \sum_{n=1}^{N_0} \frac{\varepsilon}{2N_0 L} x^*(|x_n|) +  \frac{\varepsilon}{4}\left( \displaystyle \sum_{n=N_0 + 1}^\infty |a_{k_j,n}| + \displaystyle \sum_{n=N_0 + 1}^\infty  |a_n|\right)\\
        & < \frac{\varepsilon}{2} + \frac{\varepsilon}{2} = \varepsilon.
    \end{align*}
 This shows that $z_{k_j} - z \cvfa 0$, hence $z_{k_j} \cvfa z$ as claimed. Since $z \in K$, this proves that $K$ is absolutely weakly sequentially compact. From the inclusion $ \sigma(E,E^*) \subset |\sigma|(E,E^*)$ we get that $K$ is weakly sequentially compact. It is easy to see that absolutely weakly sequentially compact sets are norm closed. 
\end{proof}
%

\begin{lemma} \label{convexa}The closed convex hull of any absolutely weakly null sequence in a Banach lattice is
absolutely weakly compact.
\end{lemma}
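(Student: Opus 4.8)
The plan is to exhibit the closed convex hull as an absolutely weakly \emph{closed} subset of the set produced by Lemma \ref{absweak7}, and then to upgrade the sequential compactness of that set to genuine compactness by passing to a separable sublattice where Theorem \ref{eberlein} applies. So, given an absolutely weakly null sequence $(x_n)_n$, I would set $C = \overline{\text{co}}(\{x_n : n \in \N\})$ together with
$$K = \left\{\sum_{n=1}^\infty a_n x_n : a_n \in \R,\ \sum_{n=1}^\infty |a_n| \le 1\right\},$$
the set whose absolute weak sequential compactness (and norm closedness) is granted by Lemma \ref{absweak7}. Every finite convex combination $\sum \lambda_n x_n$ with $\lambda_n \ge 0$ and $\sum \lambda_n = 1$ satisfies $\sum |\lambda_n| = 1 \le 1$, so $\text{co}(\{x_n\}) \subset K$, and since $K$ is norm closed we obtain $C \subset K$.

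Next I would reduce to a separable setting. Using \cite[Exercise 9, p.\,204]{alip}, I would choose a separable Banach sublattice $F$ of $E$ containing $\overline{\text{span}}\{x_n : n \in \N\}$; then $C \subset F$ (as $F$ is a norm-closed subspace) and $K \subset F$ (the series $\sum a_n x_n$ converges in norm because $(x_n)_n$ is norm bounded and $\sum |a_n| < \infty$). By Lemma \ref{induzida} the topology $|\sigma|(F,F^{*})$ is exactly the restriction of $|\sigma|(E,E^{*})$ to $F$, so $(x_n)_n$ is absolutely weakly null in $F$ and Lemma \ref{absweak7} yields that $K$ is $|\sigma|(F,F^{*})$-sequentially compact. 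Since $F$ is separable, Theorem \ref{eberlein} upgrades this to $|\sigma|(F,F^{*})$-compactness, and Lemma \ref{induzida} then transfers it back: $K$ is $|\sigma|(E,E^{*})$-compact.

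It remains to see that $C$ is $|\sigma|(E,E^{*})$-closed, so that it is a closed subset of the compact set $K$ and hence compact. Here I would record that $(E, |\sigma|(E,E^{*}))^{*} = E^{*}$: the inclusions $\sigma(E,E^{*}) \subset |\sigma|(E,E^{*}) \subset \tau_{\|\cdot\|}$ (the last because $p_{x^{*}}(x) = |x^{*}|(|x|) \le \|x^{*}\|\,\|x\|$) squeeze $|\sigma|(E,E^{*})$ between two topologies both having dual $E^{*}$, forcing its dual to be $E^{*}$ as well. Thus $|\sigma|(E,E^{*})$ is consistent with the dual pair $\langle E, E^{*}\rangle$, and by Hahn--Banach separation a norm-closed convex set is $|\sigma|(E,E^{*})$-closed. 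In particular $C$ is $|\sigma|(E,E^{*})$-closed, completing the argument.

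The step I expect to be the crux is the passage from the sequential compactness furnished by Lemma \ref{absweak7} to honest topological compactness: for the absolute weak topology this is not automatic in general, precisely because Alaoglu's theorem fails (Proposition \ref{noalaoglu}). The device that makes it work is the localization to a separable sublattice via Lemma \ref{induzida}, which is exactly the regime in which Theorem \ref{eberlein} is available; verifying that $C$ and $K$ genuinely live inside such an $F$, and that the two absolute weak topologies agree there, is the technical point that must be handled with care.
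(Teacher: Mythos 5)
Your proof is correct and follows essentially the same route as the paper's: both reduce to a separable sublattice containing $\overline{\mathrm{span}}\{x_n : n \in \N\}$, invoke Lemma \ref{absweak7} for absolute weak sequential compactness, upgrade to compactness via the separable case of Theorem \ref{eberlein}, and transfer back with Lemma \ref{induzida}. The only cosmetic difference is that you prove compactness of the whole set $K$ (by applying Lemma \ref{absweak7} inside $F$ directly) and then extract $\overline{\mathrm{co}}(\{x_n : n \in \N\})$ as an absolutely weakly closed subset, whereas the paper works with the closed convex hull throughout and transfers its sequential compactness into $F$ by extending positive functionals.
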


\begin{proof} Let $x_n \cvfa 0$ in a Banach lattice $E$. The set
$$K:= \left\{\displaystyle \sum_{n=1}^\infty a_n x_n: a_n\in\mathbb{R} \mbox{~for every~} n\in\mathbb{N} \mbox{~and~} \displaystyle \sum_{n=1}^\infty |a_n| \leq 1\right\}$$
is absolutely weakly sequentially compact by Lemma \ref{absweak7}. 
Setting $C : = \text{\rm co} \{x_n : n \in \N\}$ and denoting by $\tau_{\|\cdot\|}$ the norm topology of $E$,  we have
$$ \overline{C}:= \overline{C}^{\tau_{\|\cdot\|}} \subset \overline{C}^{|\sigma|(E,E^*)} \subset \overline{C}^{\sigma(E,E^*)} = \overline{C},
$$
where the first two inclusions follow from $ \sigma(E,E^*) \subset |\sigma|(E,E^*) \subset \tau_{\|\cdot\|}$ and the last one from Mazur's theorem (recall that $C$ is convex). It follows that $\overline{C} = \overline{C}^{|\sigma|(E,E^*)}$, so $\overline{C}$ 
is a $|\sigma|(E,E^*)$-closed subset of $E$. Observe that $K$ is norm closed (Lemma \ref{absweak7}), convex (easy) and contains $\{x_n : n \in \mathbb{N}\}$ (obvious). It follows that $\overline{C} = \overline{\rm co}\{x_n : n \in \mathbb{N}\}\subset K$. In summary, $\overline{C}$ is a $|\sigma|(E,E^*)$-closed subset of $E$ contained in the $|\sigma|(E,E^*)$-sequentially compact set $K$. Therefore, every sequence in $\overline{C}$ has a $|\sigma|(E,E^*)$-convergent subsequence to an element of $\overline{C}$.

Putting $X = \overline{ \text{span}} \{x_n: n \in \N\}$, it follows from \cite[Exercise 9, p.\,204]{alip} that there exists a separable Banach sublattice $F$ of $E$ containing $X$. Let us see that every sequence in $\overline{C}$ has a $|\sigma|(F,F^*)$-convergent subsequence to an element of $\overline{C}$. Indeed, given $(z_n)_n$ in $\overline{C}$, by the first part of the proof there exist a subsequence $(z_{n_k})_k$ of $(z_n)_n$ and $z \in \overline{C}$ such that $z_{n_k} \cvfa z$, and so $|x^*|(|z_{n_k} - z|) \longrightarrow 0$ for every $x^* \in E^*$. For any $y^* \in F^*$, we can extend $|y^*|$ to a positive linear functional $x^* \in E^*$ \cite[Corollary 1.3]{lotz}, hence $|y^*|(|z_{n_k} - z|) =  |x^*|(|z_{n_k} - z|) \longrightarrow 0$, that is, $z_{n_k} \stackrel{|\sigma|(F,F^*)}{\longrightarrow} z \in \overline{C}$. This proves that $\overline{C}$ is $|\sigma|(F,F^*)$-sequentially compact. Since $F$ is separable, $\overline{C}$ is $|\sigma|(F,F^*)$-compact by Theorem \ref{eberlein}.  It follows from  Lemma \ref{induzida} that 
$\overline{C}$ is $|\sigma|(E,E^*)$-compact.
\end{proof}

According to the reasoning above, $\overline{C} : = \overline{C}^{\tau_{\|\cdot\|}} = \overline{C}^{|\sigma|(E,E^*)}$ for every convex subset $C$ of a Banach lattice $E$. For simplicity, we shall henceforth write $\overline{C}$.

The proof of  the next lemma can be found within the proof of \cite[Theorem 1]{dowling}.

\begin{lemma} \label{lema}
    In a Banach space $X$ one cannot find weakly null sequences $(x_n)_n$ and $(y_n)_n$ with $\|x_n\| = 1$ for every $n \in \N$ such that
    $$ {\textstyle\bigcup\limits_{n=1}^\infty} \left [ (n \cdot \overline{\rm co}(\{x_n : n \in \N\})) \cap \frac{1}{n} B_X \right ] \subset \overline{\rm co}(\{y_n: n \in \N\}). $$
\end{lemma}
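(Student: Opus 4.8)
The plan is to argue by contradiction: assume normalized weakly null $(x_n)_n$ and weakly null $(y_n)_n$ satisfy the displayed inclusion, and derive a contradiction with the weak nullity of one of them. Write $A=\overline{\rm co}(\{x_n:n\in\N\})$ and $B=\overline{\rm co}(\{y_n:n\in\N\})$. First I would record the elementary reductions. Since both sequences are weakly null, Mazur's theorem gives $0\in A$ and $0\in B$; since each $\|x_n\|=1$ and $A$ is the closed convex hull of norm-one vectors, $A\subseteq B_X$. Consequently the $n=1$ term of the union is exactly $A\cap B_X=A$, so the inclusion already forces $A\subseteq B$, and the whole content lies in the terms with $n\ge 2$. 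Rewriting $nA\cap\frac1n B_X=n\left(A\cap\frac{1}{n^2}B_X\right)$, the hypothesis becomes the scale-invariant statement
\[
n\left(A\cap\tfrac{1}{n^2}B_X\right)\subseteq B\qquad(n\in\N),
\]
that is, every $a\in A$ with $\|a\|\le n^{-2}$ satisfies $na\in B$. This lets me manufacture elements of $B$ of arbitrarily small norm but of arbitrarily large ``$\ell_1$-mass'' in the directions $x_i$: since $(x_n)_n$ is weakly null, Mazur furnishes convex combinations $a_n=\sum_{i\in F_n}\mu_i^{(n)}x_i$ (with successive finite blocks $F_n$ and $\sum_{i\in F_n}\mu_i^{(n)}=1$) of norm $<n^{-2}$, whence $na_n\in B$ while the coefficients of $na_n$ sum to $n$.

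Next I would bring in basic-sequence technology. By the Bessaga--Pe\l{}czy\'{n}ski selection principle I may pass to subsequences and assume $(x_n)_n$ and $(y_n)_n$ are basic; write $(x_n^{*})$, $(y_n^{*})$ for the associated coordinate functionals, extended to $X^{*}$ with $\sup_n\|x_n^{*}\|<\infty$ and $\sup_n\|y_n^{*}\|<\infty$. (Restricting attention to amplified vectors whose underlying averages are built from the chosen subsequence of $(x_n)_n$ is harmless: those averages still lie in $A$, so the amplified vectors still lie in $B$.) The heart of the argument is then a gliding-hump/diagonal construction. Each amplified vector $na_n\in B$ is approximated by a finite convex combination of the $y_m$; using the weak nullity of $(y_m)_m$ to discard the finitely many low-index coordinates and the basis structure to pass to a block basic sequence of $(y_m)_m$, one aligns the blocks of $(x_i)$ with disjointly supported blocks of $(y_m)$. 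Testing the resulting block vectors against the functionals $x_n^{*}$, which detect the large coefficient sums, one extracts a lower $\ell_1$-estimate forcing a subsequence of $(x_n)_n$ to be equivalent to the unit vector basis of $\ell_1$.

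Finally I would close by contradiction with weak nullity. A normalized weakly null sequence cannot be equivalent to the unit vector basis of $\ell_1$: if $T\colon\ell_1\to X$, $Te_n=x_n$, were an isomorphic embedding, then $T^{*}\colon X^{*}\to\ell_\infty$ would be surjective, whereas weak nullity gives $T^{*}x^{*}=(x^{*}(x_n))_n\in c_0$ for every $x^{*}$, so $\mathrm{ran}\,T^{*}\subseteq c_0\subsetneq\ell_\infty$. This contradiction finishes the proof. I expect the genuine obstacle to be the middle step: organizing the gliding-hump bookkeeping so that the amplified mass on the $x$-side is faithfully transferred, through convex combinations of the weakly null $(y_m)_m$, into a clean $\ell_1$ lower estimate, while simultaneously controlling the tails of both weakly null sequences. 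This is precisely the delicate basic-sequence argument of \cite{dowling} that the authors have isolated; the normalization $\|x_n\|=1$ together with the blow-up factor $n$ are exactly what make the $\ell_1$ estimate unavoidable.
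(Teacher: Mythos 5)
Your proposal does not actually prove the lemma: the entire mathematical content sits in the middle step, and that step is only described, not carried out. The opening reductions are fine (the identity $n\,\overline{\rm co}(\{x_j\})\cap\frac1n B_X=n\bigl(\overline{\rm co}(\{x_j\})\cap\frac{1}{n^2}B_X\bigr)$, the Mazur-type production of convex blocks $a_n$ with $\|a_n\|<n^{-2}$ and hence amplified vectors $na_n\in\overline{\rm co}(\{y_m\})$ with coefficient sum $n$), and the closing observation that a normalized weakly null sequence cannot be equivalent to the unit vector basis of $\ell_1$ is correct. But between these two easy ends you write that ``one aligns the blocks \dots one extracts a lower $\ell_1$-estimate,'' and then concede that this is ``precisely the delicate basic-sequence argument of \cite{dowling}.'' That concession is the problem: the paper isolates this lemma exactly because that gliding-hump argument is the hard part of \cite{dowling} (and of \cite{mupasiri}), and the paper's own ``proof'' is nothing more than the citation ``the proof can be found within the proof of \cite[Theorem 1]{dowling}.'' A blind proof attempt that defers the same step to the same reference has proved nothing new and cannot be checked for correctness.

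Moreover, the mechanism you sketch for the missing step is doubtful as stated. You propose to test against the coordinate functionals $x_n^{*}$ of the basic sequence $(x_n)_n$ to obtain a lower $\ell_1$-estimate for $(x_n)_n$. But coordinate functionals only return coefficients; a lower $\ell_1$-estimate requires norm-one functionals recovering $\sum_j|a_j|$ from $\sum_j a_jx_j$, whereas $\bigl\|\sum_{j\in F}x_j^{*}\bigr\|$ may grow like $|F|$, so applying $\sum_{j\in F_n}x_j^{*}$ to the amplified vectors (which returns the value $n$) yields no contradiction with their small norms and no $\ell_1$ lower bound. Indeed, since $(x_n)_n$ is weakly null, no subsequence can satisfy a lower $\ell_1$-estimate, and the contradiction in \cite{dowling} has to be extracted from the interaction between the amplified vectors and the exact representation of elements of $\overline{\rm co}(\{y_m\})$ as sums $\sum_k b_k y_k$ with $b_k\ge 0$, $\sum_k b_k\le 1$ --- not from the $x$-side alone. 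Until that extraction is written out in full, the proposal is an outline with a hole at its center.
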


Actually the fact stated in the lemma above is one of the most difficult parts of the proofs of \cite[Theorem 1]{dowling} and \cite[Theorem 3]{mupasiri}. Fortunately, it can also be used to prove our main result:

\begin{theorem} \label{mainteo} A Banach lattice $E$ has the Schur property if and only if every absolutely weakly compact subset of $E$ is contained in the closed convex hull of an absolutely weakly null sequence.
\end{theorem}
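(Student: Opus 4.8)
The plan is to prove the biconditional by treating the two implications with different tools: Grothendieck's classical compactness principle for the forward implication, and the isolated Lemma \ref{lema} (via a contrapositive argument) for the reverse one.

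For the forward implication, suppose $E$ has the Schur property and let $K \subset E$ be absolutely weakly compact. Since $\sigma(E,E^*) \subset |\sigma|(E,E^*)$, the set $K$ is weakly compact. Under the Schur property weak convergence and norm convergence of sequences coincide, so the Eberlein--Šmulian theorem turns the weak sequential compactness of $K$ into norm sequential compactness; hence $K$ is norm compact. Grothendieck's classical principle (cited in the Introduction) then provides a norm null sequence $(f_n)_n$ with $K \subset \overline{\rm co}\{f_n : n \in \N\}$. As $|\sigma|(E,E^*)$ is coarser than the norm topology we have $f_n \cvfa 0$, so $(f_n)_n$ is absolutely weakly null and $K$ has the required form. (Lemma \ref{convexa} could be invoked to re-confirm that $\overline{\rm co}\{f_n\}$ is absolutely weakly compact, but this is not needed here.)

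For the reverse implication I would argue by contraposition: assuming $E$ fails the Schur property, I would exhibit an absolutely weakly compact set lying in the closed convex hull of no absolutely weakly null sequence. Start from a normalized weakly null sequence $(x_n)_n$ with $\|x_n\| = 1$, and, following the construction isolated in Lemma \ref{lema}, consider
$$ K := {\textstyle\bigcup_{n=1}^\infty}\left[\left(n\cdot\overline{\rm co}(\{x_k : k \in \N\})\right)\cap \tfrac1n B_E\right]\cup\{0\}. $$
The two steps are: (i) show $K$ is absolutely weakly compact, using that $\overline{\rm co}\{x_k\}$ is absolutely weakly compact by Lemma \ref{convexa}, that each truncated piece has norm at most $1/n$, and that the resulting set is absolutely weakly sequentially compact (Theorem \ref{smulian}), hence absolutely weakly compact (Theorem \ref{eberlein}); and (ii) observe that if $K \subset \overline{\rm co}\{y_n : n \in \N\}$ for some absolutely weakly null $(y_n)_n$, then, since absolute weak nullity implies weak nullity (Remark \ref{remconv}(i)), the pair $(x_n)_n,(y_n)_n$ would realize precisely the inclusion forbidden by Lemma \ref{lema}, a contradiction.

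The main obstacle is step (i) of the reverse direction, and behind it a structural point: the covering hypothesis constrains only absolutely weakly compact sets, so for the construction $K$ to be absolutely weakly compact one needs $(|x_n|)_n$ to be weakly null, i.e. $(x_n)_n$ itself absolutely weakly null (Remark \ref{remconv}(ii)). A generic normalized weakly null sequence does not satisfy this — the Rademacher sequence in $L_1[0,1]$ of Example \ref{priexe} is the canonical obstruction, being weakly null yet not absolutely weakly null. Thus the delicate part of the argument is to ensure that the failure of the hypothesis is witnessed by an absolutely weakly null sequence; by Proposition \ref{psp1} this is exactly the regime of positive (equivalently, disjoint) weakly null sequences, which is the bridge through which the absolute weak covering property and Lemma \ref{lema} interact, and it is here that the proof is most demanding.
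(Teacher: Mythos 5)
Your forward direction is fine, but the reverse direction contains a genuine gap — one you yourself flag as "the most demanding" part and then leave unresolved, which is precisely where the theorem's content lies. You start from a normalized weakly null sequence $(x_n)_n$ and correctly observe that the set $K$ you build is absolutely weakly compact only if $(x_n)_n$ is in fact \emph{absolutely} weakly null; but you never produce such a sequence, so the contradiction with Lemma \ref{lema} is never reached. Worse, the contrapositive from failure of the literal Schur property cannot be made to work at all: the word "Schur" in the statement is a slip for the \emph{positive} Schur property (as the abstract, the introduction and the paper's own proof make clear), and with the plain Schur property the reverse implication is false — $L_1[0,1]$ has the positive Schur property, so by the forward direction every absolutely weakly compact subset of $L_1[0,1]$ is covered by the closed convex hull of a norm null (hence absolutely weakly null) sequence, yet $L_1[0,1]$ fails the Schur property. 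The missing idea, which the paper's proof supplies, is Proposition \ref{psp1}(a)$\Leftrightarrow$(b): failure of the \emph{positive} Schur property yields an absolutely weakly null sequence $(z_n)_n$ that is not norm null; after passing to a subsequence with $\|z_n\| \geq \varepsilon$, the normalization $x_n = z_n/\|z_n\|$ remains absolutely weakly null because $|x_n| \leq |z_n|/\varepsilon$. With this $(x_n)_n$ in hand, your step (ii) via Lemma \ref{lema} works exactly as you describe.

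There is also a secondary flaw in your step (i): you invoke Theorem \ref{eberlein} to upgrade sequential compactness of $K$ to compactness, but that theorem is conditional — it requires $E$ separable or $B_{E^{**}}$ absolutely weak$^*$ compact, and neither holds in general (Proposition \ref{noalaoglu} and Example \ref{ex77} show the second hypothesis genuinely fails). To use it you would have to pass to a separable sublattice $F$ containing $\overline{\rm span}\{x_n : n \in \N\}$, apply Theorem \ref{eberlein} inside $F$, and transfer back via Lemma \ref{induzida} — the maneuver carried out inside the proof of Lemma \ref{convexa}. The paper avoids Eberlein entirely at this point: each $K_n$ is a closed convex, hence $|\sigma|(E,E^*)$-closed, subset of the compact set $n \cdot \overline{\rm co}(\{x_n : n \in \N\})$ (compact by Lemma \ref{convexa}), and the compactness of $K = \bigcup_n K_n$ is then proved by a direct open-cover argument: any open set containing $0$ contains a basic neighborhood $V(\delta, x_1^*, \dots, x_k^*)$, which absorbs every $K_j$ with $j$ large since $K_j \subset \frac{1}{j} B_E$, leaving only finitely many compact pieces to cover. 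Note finally that adjoining $\{0\}$ to $K$ is unnecessary: $0$ already lies in $\overline{\rm co}(\{x_n : n \in \N\})$ by Mazur's theorem, since $(x_n)_n$ is weakly null.
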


\begin{proof} Assume first that $E$ has the positive Schur property. Let $K$ be an absolutely weakly compact subset of $E$ and let $(x_n)_n$ be a sequence in $K$. By Theorem \ref{smulian} there exist a subsequence $(n_k)_k$ of $ \N$ and $x \in K$ such that $x_{n_k} \cvfa x$, that is, $(x_{n_k} - x)_k$ is an absolutely weakly null sequence in $E$. Proposition \ref{psp1} gives that $\|x_{n_k} - x\| \longrightarrow 0$, proving that $K$ is norm compact. It follows from Grothendieck's compactness principle that there exists a norm null sequence $(y_n)_n \subset E$ such that $K \subset \overline{\text{\rm co}}(\{y_n: n \in \N\})$. Since the absolute weak topology is contained in the norm topology we have that $(y_n)_n $ is absolutely weakly null.  

     Now assume that every $|\sigma|(E, E^*)$-compact subset of $E$ is contained in the $|\sigma|(E, E^*)$-closed convex hull of a $|\sigma|(E, E^*)$-null sequence.
    Suppose that $E$ fails the positive Schur property. Then there exists, by Proposition \ref{psp1}, a $|\sigma|(E, E^*)$-null sequence $(z_n)  \subset E$ which is not norm null. Without loss of generality, we may assume that there exists $\varepsilon > 0$ such that $\|z_n\| \geq \varepsilon$ for every $n \in \N$. Take $x_n = \frac{z_n }{ \|z_n\|}$ for each $n \in \N$. Using that $\frac{1}{\|z_n\|} \leq \frac{1}{\varepsilon}$ for every $n$, it follows that $(x_n)_n$ is a $|\sigma|(E, E^*)$-null sequence with $\|x_n\| = 1$ for every $n \in \N$. Define
$$ K_n = (n \cdot \overline{\text{\rm co}}(\{x_n: n \in \N\})) \cap \frac{1}{n} B_E \mbox{ for each } n \mbox{~~and~~}  K = \textstyle\bigcup\limits_{n=1}^\infty K_n. $$
Each $K_n$ is a closed convex subset of $E$, hence $K_n$ is a $|\sigma|(E,E^*)$-closed subset of the $|\sigma|(E,E^*)$-compact set $n \cdot \overline{\text{\rm co}}(\{x_n: n \in \N\})$ (Lemma \ref{convexa}). So each $K_n$ is $|\sigma|(E,E^*)$-compact. We claim that $K$ is $|\sigma|(E, E^*)$-compact. Indeed, let $\mathcal{C}$ be a $|\sigma|(E, E^*)$-open cover of $K$. Since $x_n \in K_1$ for every $n$, $(x_n)_n$ is  a $|\sigma|(E,E^*)$-null sequence contained in $K$, so $0 \in \overline{K}^{|\sigma|(E, E^*)}$. Thus we can  take $U \in \mathcal{C}$ so that $0 \in U$. As $U$ is $|\sigma|(E, E^*)$-open and contains $0$, it is contained in a basic $|\sigma|(E, E^*)$-neighborhood of $0$, that is, there exist $\delta > 0$ and $x_1^*, \dots, x_k^* \in (E^*)^+$ such that
$$ V(\delta, x_1^*, \dots, x_k^*) = \{x \in E: x_i^*(|x|) < \delta, \, \forall i = 1, \dots, k\} \subset U. $$
Fix $0< \eta < \frac{\delta}{\max_{1 \leq i \leq k} \|x_i^*\|}$ and let $N_0 \in \N$ be such that $\frac{1}{j} < \eta$ for every $j > N_0$. We have
$$ K_j \subset \eta B_E \subset V(\delta, x_1^*, \dots, x_k^*) \subset U$$
for every $j > N_0$. As a finite union of $|\sigma|(E, E^*)$-compact sets,   $K_1 \cup \cdots \cup K_{N_0}$ is $|\sigma|(E, E^*)$-compact itself, so we can pick $U_1, \dots, U_{N_1} \in \mathcal{C}$ such that
$ K_1 \cup \cdots \cup K_{N_0} \subset U_1 \cup \dots \cup U_{N_1}, $ which yields that
$$ K \subset U_1 \cup \cdots \cup U_{N_1} \cup U. $$
This proves that $K$ is $|\sigma|(E, E^*)$-compact. By assumption, there exists a $|\sigma|(E, E^*)$-null sequence $(y_n)_n \subset E$ such that $K \subset \overline{\text{\rm co}}(\{y_n: n \in \N\}).$ Since $(x_n)_n$ and $(y_n)_n$ are, in particular, weakly null sequences in $E$, Lemma \ref{lema} yields a contradiction that completes the proof.
\end{proof}

It is natural to wonder how the dual positive Schur property can be connected to the absolute weak topology. Recall from \cite{wnukdual} that a Banach lattice $E$ has the dual positive Schur property (respectively, the positive Grothendieck property) if every positive weak* null sequence in $E^*$ is norm null (respectively, weakly null). Both of these properties were introduced by Wnuk in \cite{wnukdual} and they have attracted
the attention of many experts; for recent developments we refer the reader to \cite{bu, galindomiranda, mach, pedro}. It was pointed out by Wnuk that $E$ has the dual positive Schur property if and only if $E$ has the positive Grothendieck property and $E^*$  has the positive Schur property. Combining this characterization with Theorem \ref{mainteo} and Theorem \ref{smulian}, we obtain the following connections between the absolute weak$^*$ topology and the dual positive Schur property.

\begin{corollary} \label{corolario}
 The following are equivalent for a Banach lattice $E$:\\
{\rm (a)} $E$ has the dual positive Schur property.\\
{\rm (b)} Every absolutely weak* null sequence in $E^*$ is norm null.\\
{\rm (c)} $E$ has the positive Grothendieck property and every sequentially absolutely weak$^*$-compact subset of $E^*$ is contained in the closed convex hull of an absolutely weak$^*$ null sequence.
\end{corollary}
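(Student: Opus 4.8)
The plan is to prove the equivalence (a)$\iff$(b) directly, and then to obtain (a)$\iff$(c) by transporting the two main theorems of the paper to the Banach lattice $E^{*}$, using Wnuk's factorization of the dual positive Schur property as a dictionary. Throughout I would keep in mind the absolute weak$^{*}$ analogue of Remark \ref{remconv}(ii) -- a sequence $(x_n^{*})_n$ in $E^{*}$ is absolutely weak$^{*}$ null exactly when $(|x_n^{*}|)_n$ is weak$^{*}$ null -- together with the lattice norm identity $\|x_n^{*}\| = \||x_n^{*}|\|$. For (a)$\iff$(b): if (a) holds and $(x_n^{*})_n$ is absolutely weak$^{*}$ null, then $(|x_n^{*}|)_n$ is a positive weak$^{*}$ null sequence, hence norm null by the dual positive Schur property, so $\|x_n^{*}\| = \||x_n^{*}|\| \to 0$. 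Conversely, a positive weak$^{*}$ null sequence equals its own modulus, hence is absolutely weak$^{*}$ null, so (b) forces it to be norm null, which is (a).

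For (a)$\Rightarrow$(c), Wnuk's characterization supplies the positive Grothendieck property for free, so only the Grothendieck-type statement remains. Given a sequentially absolutely weak$^{*}$ compact set $K \subset E^{*}$, I would invoke (b): any sequence in $K$ has a subsequence converging absolutely weak$^{*}$ to a point of $K$, and its difference with the limit is absolutely weak$^{*}$ null, hence norm null by (b). Thus $K$ is norm sequentially compact, so norm compact, and Grothendieck's classical compactness principle \cite{grothendieck} places $K$ inside the closed convex hull of a norm null -- a fortiori absolutely weak$^{*}$ null -- sequence.

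For (c)$\Rightarrow$(a), which is the crux, I would argue that since the positive Grothendieck property is assumed, Wnuk's factorization reduces the goal to showing that $E^{*}$ has the positive Schur property; by Theorem \ref{mainteo} applied to $E^{*}$, this in turn amounts to placing every $|\sigma|(E^{*},E^{**})$-compact set $K \subset E^{*}$ inside the closed convex hull of an absolutely weakly null sequence. For such a $K$, Theorem \ref{smulian} for $E^{*}$ yields absolute weak sequential compactness; since $|\sigma|(E^{*},E) \subset |\sigma|(E^{*},E^{**})$, the set $K$ is in particular sequentially absolutely weak$^{*}$ compact, so hypothesis (c) furnishes an absolutely weak$^{*}$ null sequence $(y_n^{*})_n$ with $K \subset \overline{\rm co}(\{y_n^{*} : n \in \N\})$. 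The positive Grothendieck property then provides the final bridge: as $(|y_n^{*}|)_n$ is a positive weak$^{*}$ null sequence it is weakly null, so $(y_n^{*})_n$ is in fact absolutely weakly null, placing $K$ in the closed convex hull of an absolutely weakly null sequence, as required.

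I expect the main obstacle to be conceptual rather than computational: properties (b) and (c) are phrased in the absolute weak$^{*}$ topology $|\sigma|(E^{*},E)$, whereas the engines, Theorem \ref{smulian} and Theorem \ref{mainteo}, govern the absolute weak topology $|\sigma|(E^{*},E^{**})$ of the Banach lattice $E^{*}$. The inclusion $|\sigma|(E^{*},E) \subset |\sigma|(E^{*},E^{**})$ transfers compactness and null notions in one direction at no cost, but reversing the passage -- upgrading an absolute weak$^{*}$ null sequence to an absolutely weakly null one -- is exactly where the positive Grothendieck property is indispensable, and the care lies in keeping track of which topology governs each null and compactness notion.
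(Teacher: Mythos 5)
Your proposal is correct and follows essentially the same route as the paper: the same use of the modulus identity $\|x^*\|=\||x^*|\|$ for (a)$\Leftrightarrow$(b), the same norm-compactness reduction via Grothendieck's classical principle for the implication towards (c), and the same combination of Wnuk's factorization, Theorem \ref{smulian}, Theorem \ref{mainteo}, and the positive Grothendieck property to upgrade absolutely weak$^*$ null to absolutely weakly null in (c)$\Rightarrow$(a). The only difference is organizational (you prove (a)$\Leftrightarrow$(b) and then (a)$\Leftrightarrow$(c), whereas the paper runs the cycle (a)$\Rightarrow$(b)$\Rightarrow$(c)$\Rightarrow$(a)), which changes nothing of substance.
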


\begin{proof} (a) $\Rightarrow$ (b)  If $(x_n^*)_n$ is an absolute weak* null sequence in $E^*$, then $(|x_n^*|)_n$ is a weak* null sequence in $E^*$, so $\|x_n^*\| = \||x_n^*|\| \longrightarrow 0$.

\medskip

\noindent (b) $\Rightarrow$ (c) If $(x_n^*)_n$ is a positive weak* null sequence in $E^*$, then $(x_n^*)_n$ is absolute weak* null, so $\|x_n^*\| \longrightarrow 0$ in $E^*$. In particular, $x_n^* \stackrel{\omega}{\longrightarrow} 0$, proving that $E$ has the positive Grothendieck property. Now, let $K $ be a $|\sigma|(E^*,E)$- sequentially compact subset of $E^*$. Given $(x_n^*)_n \subset K$, take $(x_{n_k})_k$ such that $x_{n_k}^* \cvfea x^*$, that is, $(x_{n_k}^* - x^*)_k$ is a $|\sigma|(E^*,E)$-null sequence in $E^*$. By assumption, $\|x_{n_k}^* - x^*\| \longrightarrow 0$, so $K$ is norm compact. By Grothendieck's compactness principle, there exists a norm null sequence $(y_n^*)_n$ in $ E^*$ such that $K \subset \overline{\rm co}(\{y_n^*: n \in \N\})$. We are done because $y_n^* \cvfea 0$.

\medskip

\noindent (c) $\Rightarrow$ (a) Since $E$ has the positive Grothendieck property, by \cite{wnukdual} all that is left to be proved is that $E^*$ has the positive Schur property. To do so, let $K$ is an absolutely weakly compact subset of $E^*$. Then $K$ is absolutely weakly sequentially compact by Theorem \ref{smulian}, so $K$ is absolutely weak$^*$ sequentially compact. By assumption, there exists an absolute weak$^*$ null sequence $(y_n^*)_n$ in $E^*$ such that
    $ K \subset \overline{\rm co}(\{y_n^*: n \in \N\}).$
    From $y_n^* \cvfea 0$ in $E^*$ we get that $|y_n^*| \cvfe 0$ in $E^*$. Since $E$ has the positive Grothendieck property, $|y_n^*| \cvf 0$, so $y_n^* \cvfa 0$ in $E^*$. By Theorem \ref{mainteo} we conclude that $E^*$ has the positive Schur property.
\end{proof}

\bigskip

\noindent G. Botelho and V. C. C. Miranda\\
Faculdade de Matem\'atica\\
Universidade Federal de Uberl\^andia\\
38.400-902 -- Uberl\^andia -- Brazil\\
e-mails: botelho@ufu.br, colferaiv@gmail.com 

\medskip

\noindent J. L. P. Luiz\\
Instituto Federal do Norte de Minas Gerais\\
Campus de Ara\c cua\'i\\
39.600-00 -- Ara\c cua\'i -- Brazil\\
e-mail: lucasvt09@hotmail.com

%
%
%

\end{document}